\newtheorem{theorem}{Theorem}[section]
\newtheorem{corollary}[theorem]{Corollary}
\newtheorem{lemma}[theorem]{Lemma}
\newtheorem{proposition}[theorem]{Proposition}
\newtheorem{remark}[theorem]{Remark}
\newenvironment{proof}[1][Proof]{\noindent\textbf{#1.} }{\ \rule{0.5em}{0.5em}}
\begin{document}

\newcommand{\f}{\mathfrak}
\newcommand{\rarrow}{\rightarrow}
\newcommand{\dt}{\left.\frac{d}{dt}\right|_{t=0}}
\newcommand{\wbarnabla}{\widetilde{\bar{\nabla}}}
\newcommand{\wnabla}{\widetilde{\nabla}}
\newcommand{\bb}{\mathbb}

\author{M. Castrill\'on L\'opez \\
ICMAT (CSIC-UAM-UC3M-UCM)\\
Departamento de Geometr\'\i a y Topolog\'\i a \\
Facultad de Matem\'aticas, Universidad Complutense de Madrid\\
28040 Madrid, Spain \and Ignacio Luj\'an  \\Departamento de Geometr\'\i a y Topolog\'\i a \\
Facultad de Matem\'aticas, Universidad Complutense de Madrid\\
28040 Madrid, Spain}

\title{Reduction of Homogeneous Riemannian Structures}
\date{}
\maketitle

\begin{abstract}
The goal of this article is the study of homogeneous Riemannian
structure tensors within the framework of reduction under a group
$H$ of isometries. In a first result, $H$ is a normal subgroup of
the group of symmetries associated to the reducing tensor
$\bar{S}$. The situation when $H$ is any group acting freely is
analyzed in a second result. The invariant classes of homogeneous tensors are also
investigated when reduction is performed. It turns out that the
geometry of the fibres is involved in the preservation of some of
them. Some classical examples illustrate the theory. Finally, the
reduction procedure is applied to fiberings of almost contact
manifolds over almost Hermitian manifolds. If the structure is
moreover Sasakian, the obtained reduced tensor is homogeneous
K\"ahler.
\end{abstract}

\renewcommand{\thefootnote}{\fnsymbol{footnote}}
\footnotetext{\emph{MSC2010:} Primary 53C30, Secundary 53D35,
22F30.}
\renewcommand{\thefootnote}{\arabic{footnote}}
\renewcommand{\thefootnote}{\fnsymbol{footnote}}
\footnotetext{\emph{Key words and phrases:} Connection, contact
and Hermitian structures, homogeneous structures, fibre bundle,
reduction.}
\renewcommand{\thefootnote}{\arabic{footnote}}

\section{Introduction}

Since their introduction \cite{AS}, homogeneous structure tensors
has proved to be a powerful tool in the study of homogeneous
Riemannian manifolds. Their nature is twofold. On one hand, they
belong to the tensor algebra. In particular, representation theory
techniques classify them into eight different invariant classes
with respect to a convenient action of the orthogonal group. On
the other hand, homogeneous tensors satisfy a system of partial
differential equations (Ambrose-Singer equations). Many works in
the literature combine these aspects to provide geometric
properties of the underlying Riemannian manifold. The first
characterizations were given to hyperbolic space and naturally
reductive spaces (\cite{TV}). These techniques were subsequently
generalized to Riemannian manifolds with special holonomy by many
authors (for example, \cite{AG}, \cite{CGS0}, \cite{Fin},
\cite{GMM}, \cite{Kir}). It is interesting to point out that there
is not a bijection between tensors and possible groups acting
isometrically and transitively. A same tensor can be defined by
two different groups and a same group can provide different
tensors. In this context, it is remarkable how little is known
about all homogeneous structures and tensors for even well-known
spaces. There is still much work to do.

Manifolds endowed with symmetries are relevant in many situations.
In particular, symmetries represent a classical tool in reduction
schemes intimately related with different topics as systems of
differential equations, variational principles, symplectic or
other geometric structures, etc. In particular, reduction is
recurrently applied in homogeneous manifolds. The goal of this
article is the study of the behaviour of homogeneous tensors by
reduction under subgroups of the group of isometries. In
particular, this gives rise to new homogeneous tensors in the
orbit space of the action. Additionally, the reduction process
reveals and sheds light to some previously known properties of
some homogeneous structures. Finally, the reduction technique
opens a reverse way to get new homogeneous tensors in the
unreduced space from tensors in the orbit space.

The outline of the paper is as follows. In section 2 we recall
basic definitions on homogeneous structure tensors and its
classification. Moreover, the model for reduction will be a
Riemannian principal bundle $\bar{M}\to M$, endowed with the
compatible connection defined as the orthogonal complements to the
fibres. This connection is ubiquitously used for reduction schemes
in Mechanics (see for example, \cite{Mar}, \cite{Mon}) where it is
called the mechanical connection. Section 3 begins with reduction
of homogeneous tensors $\bar{S}$ in $\bar{M}$ by the action under
a normal subgroup $H$ of the group of symmetries $\bar{G}$
associated to $\bar{S}$ (Theorem \ref{Th1}). The space of all
tensors $\bar{S}$ projecting to a same tensor $S$ in $M=\bar{M}/H$
is also determined. The expression of the reduced tensors leads to
a generalization of the reduction result (Theorem \ref{Th2}) to
the case where $\bar{S}$ is not explicitly associated to a precise
group $\bar{G}$. For example, this is the case of non-simply
connected or uncomplete manifolds where the existence of
homogenous tensors still provides interesting geometric
properties. Without the presence of the group $\bar{G}$, the
normality of the structure group $H$ of the bundle $\bar{M}\to M$
needs to be replaced by a suitable differential condition on the
mechanical connection. Finally, the behaviour of the
classification of homogeneous tensors under the reduction process
is analyzed. It is interesting to point out that the geometry of
the orbits of the $H$-action is involved in some of the classes in
this classification. Section 4 provides many examples of the main
results of the article. In particular, they explore the possible
scenarios with respect to the classes when reduction is performed.
Section 5 applies the reduction Theorem to fiberings of almost
contact manifolds over almost Hermitian manifolds (\cite{Ogiue}).
It turns out that the differential condition on the mechanical
connection is automatically satisfied for homogeneous almost
contact or Sasakian tensors. Hence they project to homogeneous
almost Hermitian or K\"ahler tensors in a natural way. This is
connected with other constructions found in the literature (see
\cite{GO}).

\section{Preliminares}

\subsection{Homogeneous Riemannian structures}

Let $(M,g)$ be a connected Riemannian manifold of dimension $n$. Let $\nabla$ be
the Levi-Civita connection of $g$ and $R$ its curvature tensor
with the convention
$$R_{XY}Z=\nabla_X\nabla_YZ-\nabla_Y\nabla_XZ-\nabla_{[X,Y]}Z.$$
A \textit{homogeneous Riemannian structure} on $(M,g)$ is a
$(1,2)$-tensor field $S$ satisfying the so called
\textit{Ambrose-Singer equations}
\begin{equation}\label{ambrose-singer equations}
\widetilde{\nabla}g=0,\quad\widetilde{\nabla}R=0,\quad\widetilde{\nabla}S=0,
\end{equation}
where $\widetilde{\nabla}=\nabla-S$ \cite{TV}. We will also denote
by $S$ the associated $(0,3)$-tensor field obtained by lowering
the contravariant index, $S_{XYZ}=g(S_XY,Z)$.

We now suppose that $(M,g)$ is homogeneous Riemannian. Let $G$ be
a connected Lie group with Lie algebra $\f{g}$ acting effectively
and transitively on $M$ by isometries. And let $K$ be the isotropy
group at a point $x\in M$ with Lie algebra $\f{k}$. A
decomposition $\f{g}=\f{m}\oplus\f{k}$ is said to be a
\textit{reductive decomposition} of $\f{g}$ if
$Ad(K)(\f{m})\subset\f{m}$. Let $\mu$ be the infinitesimal action
of $\f{g}$ at the point $x$, that is
$$\begin{array}{rrcl}
\mu: & \f{g} & \rarrow & T_xM\\
     & \xi   & \mapsto & \dt\Phi_{exp(t\xi)}(x)
\end{array}$$
where $\Phi_{a}$ denotes the action of an element $a\in G$. Then
for all $k\in K$ the following diagram is commutative
\begin{equation}\label{diagrama conmutativo Ad}
 \xymatrix{
\ar@{}[dr]|{\circlearrowleft} \f{g} \ar[d]_-{Ad(k)} \ar[r]^-{\mu}
& T_xM \ar[d]^-{(\Phi_{k})_*}\\ \f{g} \ar[r]^-{\mu} & T_xM }
\end{equation}%
The restriction of $\mu$ to $\f{m}$ gives an isomorphism
$\mu:\f{m}\rarrow T_xM$, and the canonical connection \cite{KN}
$\widetilde{\nabla}$ with respect to the reductive decomposition
$\f{g}=\f{m}\oplus\f{k}$ is determined by its value at $x$
$$\left(\widetilde{\nabla}_XY\right)_x=\mu\left([\mu^{-1}(X),\mu^{-1}(Y)]_{\f{m}}\right),\qquad X,Y\in T_xM.$$
The tensor field $S=\nabla-\widetilde{\nabla}$ is the homogeneous
Riemannian structure associated to the reductive decomposition
$\f{g}=\f{m}\oplus\f{k}$.

Ambrose-Singer Theorem states that a connected, simply
connected and complete Riemannian manifold is homogeneous
Riemannian if and only if it admits a homogeneous structure
tensor. In the case that $(M,g)$ is just a connected Riemannian
manifold, the existence of a homogeneous structure tensor implies
that $(M,g)$ is locally homogeneous. Tricerri and Vanhecke
\cite{TV} gave a classification
of the homogeneous Riemannian
structure tensors in eight invariant classes: the class $\{S=0\}$
of symmetric structures, the total space denoted by $\mathcal{S}$,
three irreducible classes under the action of the group $O(n)$
$$\begin{array}{l}
\mathcal{S}_1= \{S\in\mathcal{S}\left/S_{XYZ}=g(X,Y)\varphi(Z)-g(X,Z)\varphi(Y),\hspace{1mm}\varphi\in\Gamma(T^*M)\right.\}\\
\mathcal{S}_2=\{S\in\mathcal{S}\left/\underset{XYZ}{\f{S}}S_{XYZ}=0,\hspace{1em}c_{12}(S)=0\right.\}\\
\mathcal{S}_3=\{S\in\mathcal{S}\left/S_{XYZ}+S_{YXZ}=0\right.\}
\end{array}$$
and their direct sums
$$\begin{array}{l}
\mathcal{S}_1\oplus\mathcal{S}_2=\{S\in\mathcal{S}\left/\underset{XYZ}{\f{S}}S_{XYZ}=0\right.\}\\
\mathcal{S}_1\oplus\mathcal{S}_3=\{S\in\mathcal{S}/S_{XYZ}+S_{YXZ}=2g(X,Y)\varphi(Z)-g(X,Z)\varphi(Y)\\
\hspace{5.5cm}-g(Y,Z)\varphi(X),\hspace{1mm}\varphi\in\Gamma(T^*M)\}\\
\mathcal{S}_2\oplus\mathcal{S}_3=\{S\in\mathcal{S}\left/c_{12}(S)=0\right.\}
\end{array}$$
where $c_{12}(S)_p(Z)=\sum_iS_{e_ie_iZ}$ for any orthonormal base $\{e_i\}_{i=1,\ldots,n}$ of $T_pM$.

\subsection{The reduced metric in a principal bundle}

Let $\pi :\bar{M}\rightarrow M$ be an $H$-principal bundle, where
$\bar{M}$ is a Riemannian manifold with metric $\bar{g}$ and $H$
acts on $\bar{M}$ by isometries. Although it is not essential, the
action of isometries are
understood as left and hence $\pi $ is a left principal bundle. Let $\bar{x}%
\in \bar{M}$ and let $V_{\bar{x}}\bar{M}$ denote the vertical subspace at $%
\bar{x}$. If we take the orthogonal complement $H_{\bar{x}}\bar{M}=(V_{\bar{x%
}}\bar{M})^{\bot }$ of $V_{\bar{x}}\bar{M}$ in
$T_{\bar{x}}\bar{M}$ with
respect to the metric $\bar{g}$ we have%
\begin{equation}
T_{\bar{x}}\bar{M}=V_{\bar{x}}\bar{M}\oplus H_{\bar{x}}\bar{M}.
\label{descomp con mecanica}
\end{equation}%
Morever, as $H$ acts by isometries, the horizontal subspaces $H_{\bar{x}}%
\bar{M}$ are preserved by the action of $H$, and the decomposition (\ref%
{descomp con mecanica}) leads to the so called \textit{mechanical
connection} in the principal bundle $\bar{M}\rightarrow M$. In
this situation there is a unique Riemannian metric $g$ in $M$ such
that the restriction $\pi _{\ast
}:H_{\bar{x}}\bar{M}\rightarrow T_{\pi (\bar{x})}M$ is an isometry at every $%
\bar{x}\in \bar{M}$. Obviously, the metric $g$ satisfies
\begin{equation}
g(X,Y)\circ \pi =\bar{g}(X^{H},Y^{H})\qquad \forall X,Y\in
\mathfrak{X}(M) \label{metrica reducida}
\end{equation}%
where $X^{H}$ and $Y^{H}$ denote the horizontal lift of $X$ and
$Y$ with respect to the mechanical connection. To complete the
notation, in the
following, for a vector $Z\in T_{\bar{x}}\bar{M}$, we will denote by $%
Z^{h}\in H_{\bar{x}}\bar{M}$ the horizontal part of $Z$ with
respect to the mechanical connection. In particular,
\begin{equation}
Z^{h}=(\pi _{\ast }(Z))^{H}.  \label{pay}
\end{equation}

\bigskip

\begin{proposition}
In the situation above, if $\bar{\nabla}$ is the Levi-Civita
connection for the metric $\bar{g}$, then the Levi-Civita
connection $\nabla $ for the reduced metric $g$ is given by
\begin{equation}
\nabla _{X}Y=\pi _{\ast }(\bar{\nabla}_{X^{H}}Y^{H}),\qquad \forall
X,Y\in \mathfrak{X}(M).  \label{levi-civita reducida}
\end{equation}
\end{proposition}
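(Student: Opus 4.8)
The plan is to define a candidate operator
$$D_XY:=\pi_*(\bar{\nabla}_{X^H}Y^H),\qquad X,Y\in\mathfrak{X}(M),$$
show that $D$ is a torsion-free connection compatible with the reduced metric $g$, and invoke the uniqueness in the fundamental theorem of Riemannian geometry to conclude $D=\nabla$, which is exactly \eqref{levi-civita reducida}. Before the main argument I would record two facts about horizontal lifts. Since the mechanical connection has $H$-invariant horizontal distribution and $\pi\circ\Phi_a=\pi$ for every $a\in H$, uniqueness of the horizontal lift gives $(\Phi_a)_*X^H=X^H$, so each $X^H$ is an $H$-invariant vector field on $\bar{M}$ that is $\pi$-related to $X$. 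Consequently $[X^H,Y^H]$ is $\pi$-related to $[X,Y]$, and $X^H(f\circ\pi)=(Xf)\circ\pi$ for $f\in C^\infty(M)$.

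The only genuine obstacle is to see that $D_XY$ is a well-defined vector field on $M$, i.e.\ that $\pi_*(\bar{\nabla}_{X^H}Y^H)$ is constant along the fibres. For this I would use that $H$ acts by isometries, hence preserves $\bar{\nabla}$; combined with $(\Phi_a)_*X^H=X^H$ this gives $(\Phi_a)_*(\bar{\nabla}_{X^H}Y^H)=\bar{\nabla}_{X^H}Y^H$, so $\bar{\nabla}_{X^H}Y^H$ is $H$-invariant and its projection by $\pi_*$ does not depend on the chosen point in the fibre. The connection axioms are then routine: $C^\infty(M)$-linearity in $X$ comes from $(fX)^H=(f\circ\pi)X^H$ and tensoriality of $\bar{\nabla}$ in the lower slot, while the Leibniz rule in $Y$ follows from $(fY)^H=(f\circ\pi)Y^H$, the relation $X^H(f\circ\pi)=(Xf)\circ\pi$, and $\pi_*Y^H=Y$.

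Finally I would verify the two Levi-Civita properties. Torsion-freeness of $\bar{\nabla}$ gives $\bar{\nabla}_{X^H}Y^H-\bar{\nabla}_{Y^H}X^H=[X^H,Y^H]$, and projecting yields $D_XY-D_YX=\pi_*[X^H,Y^H]=[X,Y]$. For metric compatibility I would differentiate the defining relation $g(Y,Z)\circ\pi=\bar{g}(Y^H,Z^H)$ of \eqref{metrica reducida} along $X^H$: the left side becomes $(Xg(Y,Z))\circ\pi$ since $X^H$ is $\pi$-related to $X$, and the right side expands by the metric property of $\bar{\nabla}$. Writing $(\bar{\nabla}_{X^H}Y^H)^h=(D_XY)^H$ from \eqref{pay} and using \eqref{metrica reducida} again gives $\bar{g}(\bar{\nabla}_{X^H}Y^H,Z^H)=g(D_XY,Z)\circ\pi$, and likewise for the $Z$-term, so cancelling the surjection $\pi$ leaves $Xg(Y,Z)=g(D_XY,Z)+g(Y,D_XZ)$. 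Uniqueness of the Levi-Civita connection then forces $D=\nabla$, establishing the formula.
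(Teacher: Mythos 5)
Your proposal is correct and follows essentially the same route as the paper: define $D_XY=\pi_*(\bar{\nabla}_{X^H}Y^H)$, use the $H$-invariance coming from the isometric action to see it is well defined, verify metric compatibility via \eqref{metrica reducida} and \eqref{pay} and torsion-freeness via $\pi_*[X^H,Y^H]=[X,Y]$, and conclude by uniqueness of the Levi-Civita connection. The only difference is that you spell out the projectability argument and the connection axioms in slightly more detail than the paper does.
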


\begin{proof}
Since the structure group $H$ acts by isometries, it also acts by
affine
transformations of $\bar{\nabla}$. Thus the vector field $\bar{\nabla}%
_{X^{H}}Y^{H}$ is projectable and the operator $D_{X}Y=\pi _{\ast }(\bar{%
\nabla}_{X^{H}}Y^{H})$ is well defined. It is a direct computation
to show
that $D$ fulfills the properties of a linear connection in $M$. For $%
X,Y,Z\in \mathfrak{X}(M)$, from (\ref{metrica reducida}) and
(\ref{pay}) we have
\begin{eqnarray*}
g(D_{X}Y,Z)\circ \pi +g(Y,D_{X},Z)\circ \pi  &=&\bar{g}((\bar{\nabla}%
_{X^{H}}Y^{H})^{h},Z^{H})+\bar{g}(Y^{H},(\bar{\nabla}_{X^{H}}Z^{H})^{h}) \\
&=&\bar{g}(\bar{\nabla}_{X^{H}}Y^{H},Z^{H})+\bar{g}(Y^{H},\bar{\nabla}%
_{X^{H}}Z^{H}) \\
&=&X^{H}(\bar{g}(Y^{H},Z^{H})).
\end{eqnarray*}%
Hence $g(D_{X}Y,Z)+g(Y,D_{X}Z)=X(g(Y,Z))$ and the connection $D$
is metric. Finally, as $[X,Y]^{H}=[X^{H},Y^{H}]^{h}$, the torsion
tensor of $D$ is
\begin{eqnarray*}
T(X,Y) &=&D_{X}Y-D_{Y}X-[X,Y] \\
&=&\pi _{\ast }(\bar{\nabla}_{X^{H}}Y^{H}-\bar{\nabla}%
_{Y^{H}}X^{H}-[X^{H},Y^{H}]) \\
&=&0,
\end{eqnarray*}%
and $D$ is the Levi-Civita connection for $g$.
\end{proof}

\bigskip

\section{Main Results}

\subsection{Reduction by a normal subgroup of isometries}

Let $(\bar{M},\bar{g})$ be a homogeneous Riemannian manifold. Let $\bar{G%
}$ be a group of isometries acting transitively on $\bar{M}$ and
$H\triangleleft \bar{G}$ a normal subgroup acting freely on
$\bar{M}$. The quotient $M=\bar{M}/H$ is thus endowed (cf.
\cite[Th. 9.16]{Lee}) with a smooth structure such that $\pi
:\bar{M}\rightarrow M$ is an $H$-principal bundle. By definition,
the bundle $\pi :\bar{M}\rightarrow M$ is equipped with the
mechanical connection and $M$ is Riemannian with the reduced
metric $g$ as in (\ref{metrica reducida}). Since $H$ is normal,
there is a well-defined action of the group $G=\bar{G}/H$ on $M$
given by

\begin{equation}
\begin{array}{rrcl}
\Phi : & G\times M & \rightarrow  & M \\
& ([\bar{a}],[\bar{x}]) & \mapsto  & \Phi _{\lbrack \bar{a}]}([\bar{x}%
])=[\Phi _{\bar{a}}(\bar{x})]%
\end{array}
\label{reduced action}
\end{equation}%
where $[\bar{a}]$ and $[\bar{x}]$ denotes the classes modulo $H$ of $\bar{a}%
\in \bar{G}$ and $\bar{x}\in \bar{M}$ respectively, and $\Phi
_{\bar{a}}$ denotes the action of $\bar{G}$ on $\bar{M}$. The
action of $G$ is obviously transitive but needs not be effective.
If it is not, we replace $G$
by $G/N$, where $N$ is the kernel of the map $G\rightarrow \mathrm{Isom}(M)$, $%
a\mapsto \Phi _{a}$, $a\in G$.

\begin{proposition}
\label{prop act isometries} The group $G$ acts on $(M,g)$ by
isometries.
\end{proposition}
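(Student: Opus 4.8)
The plan is to show that each $\Phi_{[\bar{a}]}$ preserves the reduced metric $g$. The strategy is to lift the claim to the total space $\bar{M}$, use that the elements of $\bar{G}$ are isometries of $\bar{g}$, and then push back down using the defining relation (\ref{metrica reducida}) for $g$. The key geometric fact I need is that each $\Phi_{\bar{a}}$ carries horizontal subspaces to horizontal subspaces, so that it intertwines horizontal lifts.

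First I would verify that each $\Phi_{\bar{a}}$ preserves the mechanical connection. Since $\Phi_{\bar{a}}$ is an isometry of $(\bar{M},\bar{g})$ that covers $\Phi_{[\bar{a}]}$, its differential maps vertical vectors to vertical vectors (because it permutes fibres of $\pi$), and being an isometry it maps their orthogonal complement to the orthogonal complement, i.e. $(\Phi_{\bar{a}})_*(H_{\bar{x}}\bar{M})=H_{\Phi_{\bar{a}}(\bar{x})}\bar{M}$. The step that makes this work is normality of $H$ in $\bar{G}$: it guarantees that $\Phi_{\bar{a}}$ descends to a well-defined map $\Phi_{[\bar{a}]}$ on $M=\bar{M}/H$ and that $\pi\circ\Phi_{\bar{a}}=\Phi_{[\bar{a}]}\circ\pi$, so that $\Phi_{\bar{a}}$ genuinely sends fibres to fibres.

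Next I would relate horizontal lifts on $M$ before and after applying the group element. From the fibre-preserving and horizontal-preserving properties, for any $X\in\mathfrak{X}(M)$ the horizontal lift satisfies $(\Phi_{\bar{a}})_*\bigl((X)^{H}_{\bar{x}}\bigr)=\bigl((\Phi_{[\bar{a}]})_*X\bigr)^{H}_{\Phi_{\bar{a}}(\bar{x})}$, since both sides are horizontal and project under $\pi_*$ to the same vector by the intertwining relation. I would then compute, for $X,Y\in\mathfrak{X}(M)$ and using (\ref{metrica reducida}),
\begin{equation*}
g\bigl((\Phi_{[\bar{a}]})_*X,(\Phi_{[\bar{a}]})_*Y\bigr)\circ\pi
=\bar{g}\bigl(((\Phi_{[\bar{a}]})_*X)^{H},((\Phi_{[\bar{a}]})_*Y)^{H}\bigr)
=\bar{g}\bigl((\Phi_{\bar{a}})_*X^{H},(\Phi_{\bar{a}})_*Y^{H}\bigr).
\end{equation*}
Because $\Phi_{\bar{a}}$ is a $\bar{g}$-isometry, the last expression equals $\bar{g}(X^{H},Y^{H})=g(X,Y)\circ\pi$, and cancelling the surjective $\pi$ gives $g((\Phi_{[\bar{a}]})_*X,(\Phi_{[\bar{a}]})_*Y)=g(X,Y)$, which is precisely the statement that $\Phi_{[\bar{a}]}$ is an isometry.

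The main obstacle I anticipate is the bookkeeping in the horizontal-lift intertwining identity, i.e.\ carefully justifying that $\Phi_{\bar{a}}$ preserves horizontality and that this is compatible with the projection; everything else is a formal computation. This relies on normality only through the existence and well-definedness of the quotient action, while the isometry property itself comes cleanly from the horizontal decomposition being isometry-invariant. Passing to $G/N$ when the action is not effective does not affect the argument, since isometries that act trivially are dropped.
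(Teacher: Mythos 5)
Your argument is correct and is essentially the paper's own proof: both establish that $\Phi_{\bar{a}}$ preserves vertical subspaces (via $\pi\circ\Phi_{\bar{a}}=\Phi_a\circ\pi$) and hence, being a $\bar{g}$-isometry, the horizontal complements, deduce the horizontal-lift intertwining identity, and then compute $g((\Phi_a)_*X,(\Phi_a)_*Y)\circ\pi=\bar{g}(X^H,Y^H)=g(X,Y)\circ\pi$ using (\ref{metrica reducida}). No gaps.
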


\begin{proof}
The action (\ref{reduced action}) can be written as $\pi \circ \Phi _{\bar{a}%
}=\Phi _{a}\circ \pi $, for $a=[\bar{a}]$. This implies that
$\bar{G}$ preserves vertical subspaces and, acting by isometries,
also their horizontal complements. Hence, the horizontal lift of
$(\Phi _{a})_{\ast }(X) $ is $(\Phi _{\bar{a}})_{\ast }(X^{H})$
for all $X\in \mathfrak{X}(M)$. In addition, for $X,Y\in
\mathfrak{X}(M)$
\begin{eqnarray*}
g\left((\Phi _{a})_{\ast }(X),(\Phi _{a})_{\ast }(Y)\right)\circ
\pi &=&\bar{g}\left((\Phi
_{a})_{\ast }(X)^{H},(\Phi _{a})_{\ast }(Y)^{H}\right) \\
&=&\bar{g}\left((\Phi _{\bar{a}})_{\ast }(X^{H}),(\Phi
_{\bar{a}})_{\ast }(Y^{H})\right)
\\
&=&\bar{g}\left(X^{H},Y^{H}\right) \\
&=&g\left(X,Y\right)\circ \pi
\end{eqnarray*}%
and then $\Phi _{a}$ is an isometry.
\end{proof}

\bigskip

From this last Proposition, the manifold $(M,g)$ is homogeneous
Riemannian. We will call it the reduced homogeneous
Riemannian manifold.

\begin{remark}\label{remark normality}
\emph{Note that Proposition \ref{prop act isometries} shows that
the horizontal distribution is invariant by $\bar{G}$. This means
that the mechanical connection is $\bar{G}$-invariant, an
important fact that will be used in \S \ref{sec3.2}.}
\end{remark}

Let $\bar{x}\in \bar{M}$ and $x=\pi (\bar{x})\in M$. We denote by
$\bar{K}$ the isotropy group of $\bar{x}$ under the action of
$\bar{G}$, and by $K$ the corresponding isotropy group of $x$
under the action of $G$. We also denote their Lie algebras by
$\bar{\mathfrak{k}}$ and $\mathfrak{k}$ respectively. Then we have

\begin{lemma}
\label{lema isotropia} Let $\tau :\bar{G}\rightarrow G$ be the
quotient
homomorphism. Then $K=\tau (\bar{K})$ and the restriction $\tau |_{\bar{K}}:%
\bar{K}\rightarrow K$ is an isomorphism of groups.
\end{lemma}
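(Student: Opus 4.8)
The plan is to show that the restriction $\tau|_{\bar{K}}$ is a group homomorphism that maps $\bar{K}$ bijectively onto $K$; since a bijective homomorphism is an isomorphism, this yields both assertions of the lemma simultaneously. Throughout I will lean on the equivariance relation $\pi\circ\Phi_{\bar{a}}=\Phi_{\tau(\bar{a})}\circ\pi$ already recorded in the proof of Proposition \ref{prop act isometries}, together with the facts that the fibre $\pi^{-1}(x)$ is exactly the $H$-orbit $H\cdot\bar{x}$ and that $\ker\tau=H$.

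First I would establish the easy inclusion $\tau(\bar{K})\subseteq K$. If $\bar{a}\in\bar{K}$ then $\Phi_{\bar{a}}(\bar{x})=\bar{x}$, so $\Phi_{\tau(\bar{a})}(x)=\Phi_{\tau(\bar{a})}(\pi(\bar{x}))=\pi(\Phi_{\bar{a}}(\bar{x}))=\pi(\bar{x})=x$, whence $\tau(\bar{a})\in K$. For the reverse inclusion $K\subseteq\tau(\bar{K})$, which is the surjectivity of $\tau|_{\bar{K}}$, take $a\in K$ and lift it to some $\bar{a}\in\bar{G}$ with $\tau(\bar{a})=a$, using surjectivity of $\tau$. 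The equivariance relation gives $\pi(\Phi_{\bar{a}}(\bar{x}))=\Phi_{a}(x)=x$, so $\Phi_{\bar{a}}(\bar{x})$ lies in the fibre over $x$, that is, in $H\cdot\bar{x}$. Hence there is $h\in H$ with $\Phi_{\bar{a}}(\bar{x})=\Phi_{h}(\bar{x})$, and consequently $\Phi_{h^{-1}\bar{a}}(\bar{x})=\bar{x}$, so that $h^{-1}\bar{a}\in\bar{K}$. Since $h\in H=\ker\tau$ we get $\tau(h^{-1}\bar{a})=\tau(\bar{a})=a$, which proves $a\in\tau(\bar{K})$ and therefore $K=\tau(\bar{K})$.

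It remains to see that $\tau|_{\bar{K}}$ is injective, and this is where the freeness of $H$ enters decisively. As $\tau$ is a homomorphism, its restriction to the subgroup $\bar{K}$ is again a homomorphism, so injectivity amounts to showing that $\ker(\tau|_{\bar{K}})=\bar{K}\cap\ker\tau=\bar{K}\cap H$ is trivial. But $\bar{K}\cap H=\{h\in H:\Phi_{h}(\bar{x})=\bar{x}\}$ is precisely the isotropy subgroup of $\bar{x}$ for the $H$-action, which is trivial because $H$ acts freely. Thus $\tau|_{\bar{K}}$ is injective, and combined with the previous step it is a group isomorphism onto $K$.

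The conceptual heart of the argument is this last step: the free $H$-action guarantees that no isotropy is lost in passing to the quotient, since $\bar{K}\cap H$ is the $H$-isotropy of $\bar{x}$ and hence trivial. The single point demanding care is the identification $\ker\tau=H$; I read $\tau$ as the genuine quotient map $\bar{G}\rightarrow\bar{G}/H=G$, and I would treat the effectiveness reduction (the possible replacement of $G$ by $G/N$) separately, since enlarging $\ker\tau$ could a priori interfere with the injectivity computation.
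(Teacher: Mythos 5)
Your proof is correct and follows essentially the same route as the paper: the inclusion $\tau(\bar{K})\subseteq K$ via equivariance, surjectivity by lifting $a\in K$ and correcting by an element of $H$ moving $\Phi_{\bar{a}}(\bar{x})$ back to $\bar{x}$, and injectivity by reducing to the triviality of $\bar{K}\cap H$, which freeness of the $H$-action forces. Your phrasing of the last step as $\ker(\tau|_{\bar{K}})=\bar{K}\cap H$ is marginally cleaner than the paper's two-element comparison (which invokes normality to conjugate $h$ into $H$), and your caveat about $\ker\tau=H$ versus the possible effectiveness quotient $G/N$ is a fair observation, but neither changes the substance of the argument.
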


\begin{proof}
It is obvious from (\ref{reduced action}) that $\tau
(\bar{K})\subset K$. Let now $k\in K$ and take $\bar{a}\in \bar{G}$
such that $k=\tau (\bar{a})$. Then for any $x\in M$, we have
$x=\Phi _{k}(x)=\pi (\Phi _{\bar{a}}(\bar{x})) $, and then $\Phi
_{\bar{a}}(\bar{x})$ is in the same fibre as $\bar{x}$.
Hence there exists $h\in H$ such that $\Phi _{h}\circ \Phi _{\bar{a}}(\bar{x}%
)=\bar{x}$, so $h\bar{a}\in \bar{K}$. Since $\tau (h\bar{a})=\tau
(\bar{a})=k $ we have $k\in \tau (\bar{K})$. For the injectivity
of $\tau |_{\bar{K}}$,
let $\bar{k}_{1},\bar{k}_{2}\in \bar{K}$ such that $\tau (\bar{k}_{1})=\tau (%
\bar{k}_{2})$. There exists $h\in H$ such that
$h\bar{k}_{1}=\bar{k}_{2}$.
Then $\bar{k}_{1}^{-1}h\bar{k}_{1}=\bar{k}_{1}^{-1}\bar{k}_{2}$, so $\bar{k}%
_{1}^{-1}\bar{k}_{2}\in \bar{K}\cap H$. But since $H$ acts freely, $\bar{k}%
_{1}^{-1}\bar{k}_{2}=\bar{e}$, and then $\bar{k}_{1}=\bar{k}_{2}$.
\end{proof}

\bigskip

%

\begin{theorem}
\label{Th1} Let $(\bar{M},\bar{g})$ be a connected homogeneous
Riemannian
manifold and let $\bar{G}$ be a group of isometries acting transitively and effectively in $%
\bar{M}$. Let $H\lhd \bar{G}$ be a normal subgroup acting freely in $\bar{%
M}$. Then every homogeneous structure tensor $\bar{S}$ associated
to $\bar{G} $ induces a homogeneous structure tensor $S$
associated to $G=\bar{G}/H$ in the reduced Riemannian manifold
$M=\bar{M}/H$.
\end{theorem}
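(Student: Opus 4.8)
The plan is to build, out of the reductive decomposition $\bar{\mathfrak{g}}=\bar{\mathfrak{m}}\oplus\bar{\mathfrak{k}}$ that defines $\bar S$, a reductive decomposition $\mathfrak{g}=\mathfrak{m}\oplus\mathfrak{k}$ of the Lie algebra $\mathfrak{g}$ of $G=\bar G/H$, and to take $S$ to be the homogeneous structure associated to it. Since Proposition \ref{prop act isometries} already gives that $G$ acts transitively and by isometries on $(M,g)$, exhibiting such a reductive decomposition is exactly what is required: the canonical connection $\widetilde{\nabla}$ it determines is $G$-invariant and metric with $\widetilde{\nabla}R=0$, so $S=\nabla-\widetilde{\nabla}$ automatically satisfies the Ambrose--Singer equations \eqref{ambrose-singer equations}.

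First I set up the infinitesimal data. Let $\tau:\bar G\to G$ be the quotient homomorphism, so $\mathfrak{h}=\operatorname{Lie}(H)=\ker\tau_{*}$ is an ideal of $\bar{\mathfrak{g}}$ and $\tau_{*}:\bar{\mathfrak{g}}\to\mathfrak{g}$ is the projection. Differentiating the relation $\pi\circ\Phi_{\bar a}=\Phi_{\tau(\bar a)}\circ\pi$ (from the proof of Proposition \ref{prop act isometries}) along one-parameter subgroups yields the intertwining identity $\mu\circ\tau_{*}=\pi_{*}\circ\bar\mu$ between the infinitesimal actions $\bar\mu$ at $\bar x$ and $\mu$ at $x=\pi(\bar x)$. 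By Lemma \ref{lema isotropia} the map $\tau_{*}$ restricts to an isomorphism $\bar{\mathfrak{k}}\xrightarrow{\sim}\mathfrak{k}$, so I take $\mathfrak{k}=\tau_{*}(\bar{\mathfrak{k}})$.

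To define the complement I use the isomorphism $\bar\mu|_{\bar{\mathfrak{m}}}:\bar{\mathfrak{m}}\xrightarrow{\sim}T_{\bar x}\bar M$ together with the orthogonal splitting \eqref{descomp con mecanica}: set $\bar{\mathfrak{m}}_{H}=(\bar\mu|_{\bar{\mathfrak{m}}})^{-1}(H_{\bar x}\bar M)$ and put $\mathfrak{m}:=\tau_{*}(\bar{\mathfrak{m}}_{H})$. I then verify this is a reductive complement. First, $\tau_{*}$ is injective on $\bar{\mathfrak{m}}_{H}$ because $\bar{\mathfrak{m}}_{H}\cap\mathfrak{h}=0$: an element there would have $\bar\mu$-image in $H_{\bar x}\bar M\cap V_{\bar x}\bar M=0$ (using that the free $H$-action gives $\bar\mu(\mathfrak{h})=V_{\bar x}\bar M$), hence would lie in $\bar{\mathfrak{m}}\cap\bar{\mathfrak{k}}=0$. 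Consequently $\dim\mathfrak{m}=\dim\bar{\mathfrak{m}}_{H}=\dim H_{\bar x}\bar M=n$. Second, $\mu(\mathfrak{m})=\pi_{*}(\bar\mu(\bar{\mathfrak{m}}_{H}))=\pi_{*}(H_{\bar x}\bar M)=T_{x}M$, so $\mu|_{\mathfrak{m}}$ is onto, hence an isomorphism; since $\mathfrak{k}=\ker\mu$, this forces $\mathfrak{m}\cap\mathfrak{k}=0$, and a dimension count gives $\mathfrak{g}=\mathfrak{m}\oplus\mathfrak{k}$.

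The main point is the $Ad(K)$-invariance of $\mathfrak{m}$, and this is where the normality of $H$ enters. I claim $\bar{\mathfrak{m}}_{H}$ is $Ad(\bar K)$-invariant: for $\bar k\in\bar K$ diagram \eqref{diagrama conmutativo Ad} gives $\bar\mu(Ad(\bar k)\xi)=(\Phi_{\bar k})_{*}\bar\mu(\xi)$, and because $\bar G$ preserves the horizontal distribution (Remark \ref{remark normality}) while $\bar k$ fixes $\bar x$, the map $(\Phi_{\bar k})_{*}$ preserves $H_{\bar x}\bar M$; combined with $Ad(\bar k)\bar{\mathfrak{m}}\subset\bar{\mathfrak{m}}$ this keeps $Ad(\bar k)\xi$ inside $\bar{\mathfrak{m}}_{H}$. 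Pushing forward through $\tau$, and using $K=\tau(\bar K)$ from Lemma \ref{lema isotropia} together with $\tau_{*}\circ Ad(\bar k)=Ad(\tau(\bar k))\circ\tau_{*}$, yields $Ad(k)\mathfrak{m}=\mathfrak{m}$ for every $k\in K$. Thus $\mathfrak{g}=\mathfrak{m}\oplus\mathfrak{k}$ is reductive and its associated homogeneous structure is the desired $S$. I expect this invariance step to be the crux of the argument: it is the algebraic shadow of the geometric fact that the mechanical connection is $\bar G$-invariant, which is precisely what can fail when $H$ is not normal.
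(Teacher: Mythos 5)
Your proposal is correct and takes essentially the same approach as the paper: both proofs build the reduced reductive decomposition $\mathfrak{g}=\mathfrak{m}\oplus\mathfrak{k}$ with $\mathfrak{k}=\tau_{*}(\bar{\mathfrak{k}})$ and $\mathfrak{m}=\tau_{*}\bigl(\bar{\mathfrak{m}}\cap\bar{\mu}^{-1}(H_{\bar{x}}\bar{M})\bigr)$, establish the intertwining $\mu\circ\tau_{*}=\pi_{*}\circ\bar{\mu}$, and obtain $Ad(K)$-invariance of $\mathfrak{m}$ from the $Ad(\bar{K})$-invariance of the horizontal part upstairs. The one thing you omit that the paper includes inside the proof is the derivation of the explicit formula $(S_{x})_{X}Y=\pi_{*}\bigl((\bar{S}_{\bar{x}})_{X^{H}}Y^{H}\bigr)$, which shows the induced tensor depends only on $\bar{S}$ (not on the particular reductive decomposition chosen for it) and is what the subsequent corollary and the rest of the paper use.
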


\begin{proof}
Let $\bar{x}\in \bar{M}$ and $x=\pi (\bar{x})\in M$, and let $\bar{\mathfrak{%
g}}$ be the Lie algebra of $\bar{G}$. For any reductive decomposition $\bar{%
\mathfrak{g}}=\bar{\mathfrak{m}}\oplus \bar{\mathfrak{k}}$ associated to $%
\bar{S}$, the restriction isomorphism $\bar{\mu}:\bar{\mathfrak{m}}%
\rightarrow T_{\bar{x}}\bar{M}$ induces from $\bar{g}$ a positive
definite
bilinear form $B$ in $\bar{\mathfrak{m}}$. Moreover, by the commutativity of (\ref{diagrama conmutativo Ad}) the bilinear form $B$ is $Ad(\bar{K})$%
-invariant, that is,
\begin{equation*}
B(Ad(\bar{k})\xi ,Ad(\bar{k})\eta )=B(\xi ,\eta )\qquad \forall
\bar{k}\in \bar{K}.
\end{equation*}%
Then (\ref{descomp con mecanica}) induces an orthogonal and $Ad(\bar{K})$
-invariant decomposition
\begin{equation*}
\bar{\mathfrak{m}}=\bar{\mathfrak{m}}^{v}\oplus
\bar{\mathfrak{m}}^{h},
\end{equation*}%
\label{decomp of m barra} i.e., $Ad(\bar{K})(\bar{\mathfrak{m}}%
^{v})\subset \bar{\mathfrak{m}}^{v}$ and $Ad(\bar{K})(\bar{\mathfrak{m}}%
^{h})\subset \bar{\mathfrak{m}}^{h}$.

Let $\mathfrak{g}=\bar{\mathfrak{g}}/\mathfrak{h}$ be the lie
algebra of $G$ and $\mu :\mathfrak{g}\rightarrow T_{x}M$ the
corresponding infinitesimal action at $x$. For any $\bar{\xi}\in
\bar{\mathfrak{g}}$, by (\ref{reduced action}) we have
\begin{eqnarray*}
\pi _{\ast }\circ \bar{\mu}(\bar{\xi}) &=&\pi _{\ast }\left( \left. \frac{d}{%
dt}\right\vert _{t=0}\Phi _{\exp(t\bar{\xi})}(\bar{x})\right)  \\
&=&\left. \frac{d}{dt}\right\vert _{t=0}\left( \pi \circ \Phi _{\exp(t%
\bar{\xi})}\right) (\bar{x}) \\
&=&\left. \frac{d}{dt}\right\vert _{t=0}\Phi _{\tau (\exp(t\bar{\xi}%
))}(\pi (\bar{x})) \\
&=&\left. \frac{d}{dt}\right\vert _{t=0}\Phi _{\exp(t\tau _{\ast }(\bar{%
\xi}))}(x) \\
&=&\mu \circ \tau _{\ast }(\bar{\xi})
\end{eqnarray*}%
which means that the following diagram is commutative
\begin{equation}\label{diagram
conmutativo tau, mu y pi} \xymatrix{ \ar@{}[dr]|{\circlearrowleft}
\bar{\f{g}} \ar[d]_-{\tau_*} \ar[r]^-{\bar{\mu}} & T_x\bar{M}
\ar[d]^-{\pi_*}\\ \f{g} \ar[r]^-{\mu} & T_pM }
\end{equation}%
Restrictions to $\bar{\mathfrak{m}}^{h}$ and
$\bar{\mathfrak{m}}^{v}$ give commutative diagrams
\begin{equation}
\begin{array}{lr}
\xymatrix{ \ar@{}[dr]|{\circlearrowleft} \bar{\f{m}}^v
\ar[d]_-{\tau_*} \ar[r]^-{\bar{\mu}} & V_x\bar{M}
\ar[d]^-{\pi_*}\\ \tau_*(\bar{\f{m}}^v) \ar[r]^-{\mu} & \{0\} }\ &
\xymatrix{ \ar@{}[dr]|{\circlearrowleft} \bar{\f{m}}^h
\ar[d]_-{\tau_*} \ar[r]^-{\bar{\mu}} & H_x\bar{M}
\ar[d]^-{\pi_*}\\ \tau_*(\bar{\f{m}}^h) \ar[r]^-{\mu} & T_pM }%
\end{array}
\label{diagr conmutativ restring}
\end{equation}%
which shows that $\tau _{\ast }:\mathfrak{m}^{h}\rightarrow \tau _{\ast }(%
\mathfrak{m}^{h})$ and $\mu :\tau _{\ast
}(\mathfrak{m}^{h})\rightarrow T_{x}M$ are isomorphisms, and $\tau
_{\ast }(\mathfrak{m}^{v})\subset \mathfrak{k}$. In addition, by
Lemma \ref{lema isotropia} the restriction of
$\tau _{\ast }:\bar{\mathfrak{g}}\rightarrow \mathfrak{g}$ to $\bar{%
\mathfrak{k}}$ is an isomorphism of Lie algebras from
$\bar{\mathfrak{k}}$ to $\mathfrak{k}$. Therefore, denoting by
$\mathfrak{m}$ the image $\tau_*(\bar{\mathfrak{m}}^h)$, we have
the decomposition
\begin{equation}  \label{decomp reduct abajo}
\mathfrak{g}=\mathfrak{m}\oplus\mathfrak{k}.
\end{equation}

Let $k\in K$ and $\xi\in\mathfrak{m}$, and let $\bar{k}\in\bar{K}$ and $\bar{%
\xi}\in\bar{\mathfrak{m}}^h$ be such that $\tau(\bar{k})=k$ and $\tau_*(\bar{%
\xi})=\xi$ we have
\begin{eqnarray*}
Ad(k)(\xi)& = & Ad(\tau(\bar{k}))(\tau_*(\bar{\xi})) \\
&=& \mu^{-1}\circ\Phi_{\tau(\bar{k})}\circ\mu(\tau_*(\bar{\xi})) \\
&=& \mu^{-1}\circ\Phi_{\tau(\bar{k})}\circ\pi_*(\bar{\mu}(\bar{\xi})) \\
&=& \mu^{-1}\circ\pi_*\circ\Phi_{\bar{k}}(\bar{\mu}(\bar{\xi})) \\
&=& \mu^{-1}\circ\pi_*\circ\bar{\mu}(Ad(\bar{k})(\bar{\xi})) \\
&=& \mu^{-1}\circ\mu\circ\tau_*(Ad(\bar{k})(\bar{\xi})) \\
&=& \tau_*\left(Ad(\bar{k})(\bar{\xi})\right).
\end{eqnarray*}
Since $\bar{\mathfrak{m}}^h$ is $Ad(\bar{K})$-invariant we deduce that $%
Ad(k)(\mathfrak{m})\subset\tau_*(\bar{\mathfrak{m}}^h)=\mathfrak{m}$,
which proves that (\ref{decomp reduct abajo}) is a reductive
decomposition.

The homogeneous structure tensor associated to (\ref{decomp reduct
abajo}) at $x$ is given by \cite[p.24]{TV}
\begin{equation*}
(S_{x})_{X}Y=(\nabla _{Y}\xi ^{\ast })_{x}\qquad X,Y\in T_{x}M
\end{equation*}%
where $\xi ^{\ast }$ is the vector field given by the
infinitesimal action
of $\xi \in \mathfrak{m}$ with $\xi _{x}^{\ast }=\mu (\xi )=X$. Let $\bar{\xi%
}\in \bar{\mathfrak{m}}^{h}$be such that $\tau _{\ast
}(\bar{\xi})=\xi $ then
\begin{eqnarray*}
(S_{x})_{X}Y=(\nabla _{Y}\xi ^{\ast })_{x} &=&\pi _{\ast }\left( (\bar{\nabla%
}_{Y^{H}}(\xi ^{\ast })^{H})_{\bar{x}}\right)  \\
&=&\pi _{\ast }\left( (\bar{\nabla}_{Y^{H}}\bar{\xi}^{\ast
})\right) -\pi
_{\ast }\left( (\bar{\nabla}_{Y^{H}}(\bar{\xi}^{\ast })^{v})_{\bar{x}%
}\right).
\end{eqnarray*}%
Now let $\bar{Z}\in T_{\bar{x}}\bar{M}$ be an horizontal vector,
since $\bar{\xi}_{\bar{x}}^{\ast }$ is horizontal
\begin{equation*}
\bar{g}\left( (\bar{\nabla}_{Y^{H}}(\bar{\xi}^{\ast
})^{v})_{\bar{x}},\bar{Z} \right) =Y^{H}\bar{g} \left(
(\bar{\xi}^{\ast })^{v},\bar{Z}\right) -\bar{g} \left(
(\bar{\xi}^{\ast
})_{\bar{x}}^{v},\bar{\nabla}_{Y^{H}}\bar{Z}\right) =0.
\end{equation*}%
Hence by \cite[p.24]{TV} and (\ref{diagr conmutativ restring})
\begin{equation}
(S_{x})_{X}Y=\pi _{\ast }\left(
(\bar{S}_{\bar{x}})_{X^{H}}Y^{H}\right) \qquad X,Y\in T_{x}M.
\label{tensor reducido en p}
\end{equation}%
Finally we extend $S_{x}$ to the whole $M$ with the action of $G$
to obtain a homogeneous structure tensor $S$.
\end{proof}

\bigskip

We shall call the tensor field $S$ the \textit{reduced homogeneous
structure tensor}.

\begin{corollary}
The reduced homogeneous structure can be expressed as
\begin{equation}  \label{tensor reducido}
S_XY=\pi_*\left(\bar{S}_{X^H}Y^H\right)\qquad
X,Y\in\mathfrak{X}(M).
\end{equation}
\end{corollary}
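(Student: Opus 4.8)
The plan is to upgrade the pointwise identity \eqref{tensor reducido en p}, which was established in the proof of Theorem \ref{Th1} only at the distinguished base point $x$, to an identity of tensor fields valid on all of $M$. Recall that $S$ was defined by extending $S_x$ through the transitive action of $G$, while $\bar{S}$ is by hypothesis invariant under $\bar{G}$; the natural strategy is therefore to transport the formula from $x$ to an arbitrary point $y\in M$ by invoking the equivariance of every object involved.

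Concretely, fix $y\in M$ and choose $a=[\bar{a}]\in G$ with $\Phi_a(x)=y$, which exists by transitivity of the $G$-action. For $X,Y\in T_yM$ put $X_0=(\Phi_{a^{-1}})_\ast X$ and $Y_0=(\Phi_{a^{-1}})_\ast Y$ in $T_xM$, and apply \eqref{tensor reducido en p} to get $(S_x)_{X_0}Y_0=\pi_\ast\bigl((\bar{S}_{\bar{x}})_{X_0^H}Y_0^H\bigr)$. I would then push this equality forward by $\Phi_a$ using three facts in turn. First, the $G$-invariance of $S$ gives $(S_y)_XY=(\Phi_a)_\ast\bigl((S_x)_{X_0}Y_0\bigr)$, since $(\Phi_a)_\ast X_0=X$ and $(\Phi_a)_\ast Y_0=Y$. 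Second, the intertwining relation $\pi\circ\Phi_{\bar{a}}=\Phi_a\circ\pi$ from the proof of Proposition \ref{prop act isometries} yields $(\Phi_a)_\ast\circ\pi_\ast=\pi_\ast\circ(\Phi_{\bar{a}})_\ast$. Third, the $\bar{G}$-invariance of $\bar{S}$ gives $(\Phi_{\bar{a}})_\ast\bigl((\bar{S}_{\bar{x}})_{X_0^H}Y_0^H\bigr)=(\bar{S}_{\bar{y}})_{(\Phi_{\bar{a}})_\ast X_0^H}(\Phi_{\bar{a}})_\ast Y_0^H$, where $\bar{y}=\Phi_{\bar{a}}(\bar{x})$ lies in the fibre over $y$.

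The remaining ingredient is the behaviour of the horizontal lift under $\bar{G}$, which is exactly Remark \ref{remark normality}: the mechanical connection being $\bar{G}$-invariant, $(\Phi_{\bar{a}})_\ast X_0^H$ is again horizontal and satisfies $\pi_\ast(\Phi_{\bar{a}})_\ast X_0^H=(\Phi_a)_\ast X_0=X$, so it coincides with the horizontal lift $X^H$ at $\bar{y}$; likewise $(\Phi_{\bar{a}})_\ast Y_0^H=Y^H$. Chaining the three equivariances then collapses the computation to $(S_y)_XY=\pi_\ast\bigl((\bar{S}_{\bar{y}})_{X^H}Y^H\bigr)$, which is precisely the asserted formula \eqref{tensor reducido} evaluated at $y$.

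The step I expect to demand the most care is verifying that the right-hand side is genuinely a function of $y$ alone, independent of the choice of lift $\bar{a}$ of $a$ and, equivalently, of the point $\bar{y}$ in the fibre over $y$. This is where the normality of $H$ enters: two admissible choices differ by an element of $H$, and the $\bar{G}$-invariance (hence $H$-invariance) of both $\bar{S}$ and the horizontal distribution, together with the fact that $\pi_\ast$ annihilates the vertical ambiguity along the fibre, ensures that $\pi_\ast\bigl((\bar{S}_{\bar{y}})_{X^H}Y^H\bigr)$ is well defined. Once this independence is secured, the pointwise formula holds at every $y\in M$, which proves \eqref{tensor reducido}.
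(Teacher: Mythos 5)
Your argument is correct and follows essentially the same route as the paper's own (very terse) proof: transporting the pointwise identity at $x$ to an arbitrary point via the $G$-invariance of $S$, the $\bar{G}$-invariance of $\bar{S}$, and the fact that the horizontal lift of $(\Phi_a)_*X$ is $(\Phi_{\bar{a}})_*(X^H)$. Your additional check that the right-hand side is independent of the choice of $\bar{a}$ lifting $a$ is a welcome piece of care that the paper leaves implicit.
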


\begin{proof}
Let $\bar{a}\in\bar{G}$ and $a=\tau(\bar{a})\in G$ we proved that
the
horizontal lift of $(\Phi_a)_*(X)$ is $(\Phi_{\bar{a}})_*(X^H)$ for all $X\in%
\mathfrak{X}(M)$. This together with the invariance of $\bar{S}$
by $\bar{G}$ and the invariance of $S$ by $G$ gives (\ref{tensor
reducido}).
\end{proof}

\bigskip

\subsection{The space of tensors reducing to a given tensor}

Suppose we are now in the situation of Theorem \ref{Th1} and we
have a homogeneous structure tensor $S$ associated to $G$ in the
reduced manifold $M $. Using diagram (\ref{diagram conmutativo
tau, mu y pi}) we can define the subspaces of $\mathfrak{\bar{g}}$
\[
\bar{\mathfrak{m}}^{h}=\tau _{\ast }^{-1}(\mathfrak{m})\cap \bar{\mu}%
^{-1}(H_{\bar{x}}\bar{M})\qquad \text{and}\qquad
\bar{\mathfrak{m}}^{v}=\mathfrak{h}.
\]
Then the decomposition
\begin{equation}
\bar{\mathfrak{g}}=\bar{\mathfrak{m}} \oplus
\bar{\mathfrak{k}},\qquad \text{with} \qquad
\bar{\mathfrak{m}}=\bar{\mathfrak{m}}^{v}\oplus
\bar{\mathfrak{m}}^{h} \label{decomp reductiva construida arriba}
\end{equation}
is a reductive decomposition. Indeed, since $H$ is normal in
$\bar{G}$ it is obvious that $Ad(\bar{K})(\mathfrak{h})\subset
\mathfrak{h}$. On the other hand, for $\bar{k}\in \bar{K}$ and
$\bar{\xi}\in \bar{\mathfrak{m}}^{h}$, as
$\bar{\mu}(Ad(\bar{k})(\bar{\xi}))=(\Phi _{\bar{k}})_{\ast
}(\bar{\mu}(\bar{\xi}))$, we have
$\bar{\mu}(Ad(\bar{k})(\bar{\xi}))\in H_{ \bar{x}}\bar{M}$ and
$\tau _{\ast }\left( Ad(\bar{k})(\bar{\xi})\right) \in
\mathfrak{m}$, and then $Ad(\bar{k})(\bar{\xi})\in
\bar{\mathfrak{m}}^{h}$. The homogeneous structure tensor
associated to this decomposition at $\bar{x}$ is (see, for example
\cite{MGO2})
\begin{equation}\label{formula tensor (3,0)}
(\bar{S}_{\bar{x}})_{\bar{X}\bar{Y}\bar{Z}}=\frac{1}{2} \left(
B([\bar{\xi},\bar{\eta}]_\f{\bar{m}},\bar{\zeta})-B([\bar{\eta},\bar{\zeta}]_\f{\bar{m}},\bar{\xi})
+B([\bar{\zeta},\bar{\xi}]_\f{\bar{m}},\bar{\eta})\right),\qquad
\bar{X},\bar{Y},\bar{Z}\in T_{\bar{x}}\bar{M}
\end{equation}
where $\bar{\xi},\bar{\eta},\bar{\zeta}\in\bar{\f{m}}$ are such
that their images by $\bar{\mu}$ are $X,Y,Z$, and $B$ is the
bilinear form induced on $\bar{\f{m}}$ from $T_{\bar{x}}\bar{M}$
by $\bar{\mu}$. Note that we have exactly the situation in the
proof of Theorem \ref{Th1}, so the homogeneous structure tensor
$\bar{S}$ associated to (\ref{decomp reductiva construida arriba})
reduces to $S$.

We can construct all other homogeneous structures in $\bar{M}$
associated to $\bar{G}$ by changing $\mathfrak{\bar{m}}$ in
(\ref{decomp reductiva construida arriba}) by the graph
\[
\bar{\mathfrak{m}}^\varphi =\{X+\varphi(X)/X\in\mathfrak{\bar{m}}\}
\]
of an $Ad(\bar{K})$-equivariant map $\varphi :\mathfrak{h}\oplus \bar{\mathfrak{m}}^{h}\rightarrow \bar{\mathfrak{%
k}}$. The condition that the new homogeneous structure tensors
reduce to $S$ is equivalent to the condition $\varphi _{\left.
{}\right\vert \bar{\mathfrak{m} }^{h}}=0$. So the family of
homogeneous structure tensors that reduce to $S$ is parameterized
by the set of $Ad(\bar{K})$-equivariant maps $\varphi :
\mathfrak{h}\rightarrow \bar{\mathfrak{k}}$. For the sake of
convenience we will denote by the same $\varphi$ both
$\varphi:\f{h}\rarrow\bar{\f{k}}$ and its extension by zero to
$\bar{\f{m}}=\f{h}\oplus\bar{\f{m}}^h$. The expression of the
homogeneous structure tensor $\bar{S}^{\varphi}$ associated to
this map is the same as in (\ref{formula tensor (3,0)}) by
changing $\f{\bar{m}}$ to $\f{\bar{m}}^\varphi$, $B$ to the
induced bilinear form $B^\varphi$ in $\f{\bar{m}}^\varphi$ and the
$\bar{\xi},\bar{\eta},\bar{\zeta}$ to
$\bar{\xi}'=\bar{\xi}+\varphi({\bar{\xi}})$,
$\bar{\eta}'=\bar{\eta}+\varphi({\bar{\eta}})$,
$\bar{\zeta}'=\bar{\zeta}+\varphi({\bar{\zeta}})\in
\f{\bar{m}}^\varphi$. As
$$[\bar{\xi}',\bar{\eta}']_{\bar{\f{m}}^\varphi}=[\bar{\xi},\bar{\eta}]_{\bar{\f{m}}^\varphi}
+[\bar{\xi},\varphi(\bar{\eta})]_{\bar{\f{m}}^\varphi}+[\varphi(\bar{\xi}),\bar{\eta}]_{\bar{\f{m}}^\varphi}
+[\varphi(\bar{\xi}),\varphi(\bar{\eta})]_{\bar{\f{m}}^\varphi}$$
and
$[\varphi(\bar{\xi}),\varphi(\bar{\eta})]_{\bar{\f{m}}^\varphi}=0$
we have that
\begin{eqnarray*}
B^\varphi\left([\bar{\xi}',\bar{\eta}']_{\bar{\f{m}}^\varphi},\bar{\zeta}'\right)&=&
B^\varphi\left([\bar{\xi},\bar{\eta}]_{\bar{\f{m}}^\varphi},\bar{\zeta}'\right)
+B^\varphi\left([\bar{\xi},\varphi(\bar{\eta})]_{\bar{\f{m}}^\varphi}+[\varphi(\bar{\xi}),\bar{\eta}]_{\bar{\f{m}}^\varphi},\bar{\zeta}'\right)\\
&=&B\left([\bar{\xi},\bar{\eta}]_{\bar{\f{m}}},\bar{\zeta}\right)
+B\left([\bar{\xi},\varphi(\bar{\eta})]+[\varphi(\bar{\xi}),\bar{\eta}],\bar{\zeta}\right),
\end{eqnarray*}
where one has to take into account that the isomorphism
$\f{\bar{m}} \to \f{\bar{m}}^\varphi$, $\bar{\xi}\mapsto
\bar{\xi}+\varphi (\bar{\xi})$ is an isometry with respect to $B$
and $B^\varphi$. Hence
\begin{eqnarray}\label{ecuacio S'}
(\bar{S}^{\varphi}_{\bar{x}})_{\bar{X}\bar{Y}\bar{Z}}&=&(\bar{S}_{\bar{x}})_{\bar{X}\bar{Y}\bar{Z}}
+\frac{1}{2}\left\{ B\left( [\bar{\xi},\varphi (
\bar{\eta})]+[\varphi
(\bar{\xi}),\bar{\eta}],\bar{\zeta}\right)\right.\nonumber \\
&-& \left. B\left( [\bar{\eta},\varphi ( \bar{\zeta})]+[\varphi
(\bar{\eta}),\bar{\zeta}],\bar{\xi}\right)+B\left(
[\bar{\zeta},\varphi ( \bar{\xi})]+[\varphi
(\bar{\zeta}),\bar{\xi}],\bar{\eta}\right)\right\}.
\end{eqnarray}
The summands involving $B$ define a tensor field $P^{\varphi}$
globally defined in $\bar{M}$ by the left action of $\bar{G}$.
More precisely, for any $\bar{y}\in \bar{M}$, with
$\bar{y}=\Phi_{\bar{a}}(\bar{x})$, $\bar{a}\in\bar{G}$, this
tensor is
\begin{eqnarray}\label{Pfi}
(P_{\bar{y}}^{\varphi})_{\bar{X}\bar{Y}\bar{Z}}&=&\frac{1}{2}\left\{
B_{\bar{y}}\left( [\bar{\xi},\varphi_{\bar{y}} (
\bar{\eta})]+[\varphi_{\bar{y}}
(\bar{\xi}),\bar{\eta}],\bar{\zeta}\right) - B_{\bar{y}}\left(
[\bar{\eta},\varphi_{\bar{y}} ( \bar{\zeta})]+[\varphi_{\bar{y}}
(\bar{\eta}),\bar{\zeta}],\bar{\xi}\right) \right.\nonumber\\
& + & \left. B_{\bar{y}}\left( [\bar{\zeta},\varphi_{\bar{y}} (
\bar{\xi})]+[\varphi_{\bar{y}}(\bar{\zeta}),\bar{\xi}],\bar{\eta}\right)\right\},
\end{eqnarray}
for $\bar{X},\bar{Y},\bar{Z}\in T_{\bar{y}}\bar{M}$, where
$$
\bar{\f{m}}_{\bar{y}}:=Ad(\bar{a})(\bar{\f{m}}),\qquad
\bar{\f{k}}_{\bar{y}}:=Ad(\bar{a})(\bar{\f{k}}),
$$
$$\varphi_{\bar{y}}:=Ad(\bar{a})\circ\varphi\circ
Ad(\bar{a}^{-1}):\f{h}\rarrow\bar{\f{k}}_{\bar{y}},$$
$B_{\bar{y}}$ is the bilinear form on $\bar{\f{m}}_{\bar{y}}$
induced from $\bar{g}_{\bar{y}}$ by
$$\bar{\mu}_{\bar{y}}:=\left(\Phi_{\bar{a}}\right)_*\circ\bar{\mu}\circ
Ad(\bar{a}^{-1}):\bar{\f{m}}_{\bar{y}}\rarrow
T_{\bar{y}}\bar{M},$$ and
$\bar{\xi},\bar{\eta},\bar{\zeta}\in\f{\bar{m}}_{\bar{y}}$ are
such that their images by $\bar{\mu}_{\bar{y}}$ are
$\bar{X},\bar{Y},\bar{Z}$ respectively.

\bigskip

\noindent We have then proved

\begin{proposition}
In the situation of Theorem \ref{Th1}, let $S$ be a homogeneous
structure tensor in $M$ associated to $G$. Then the space of
homogeneous structure tensors in $\bar{M}$ associated to $\bar{G}$
and reducing to $S$ is a vector space isomorphic to the space of
$Ad(\bar{K})$-equivariant maps $\varphi :\mathfrak{h}\rightarrow
\bar{\mathfrak{k}}$. Moreover, the isomorphism is given by
\begin{equation*}
\varphi \mapsto \bar{S}^{\varphi}=\bar{S}+P^{\varphi}
\end{equation*}
where $\bar{S}$ is the homogeneous structure associated to the
decomposition (\ref{decomp reductiva construida arriba}) and
$P^\varphi$ is given in (\ref{Pfi}).
\end{proposition}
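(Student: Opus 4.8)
The plan is to realize the assignment $\varphi\mapsto\bar S^{\varphi}=\bar S+P^{\varphi}$ as a bijection from the $Ad(\bar K)$-equivariant maps $\varphi:\f{h}\rarrow\bar{\f{k}}$ onto the set of homogeneous structures associated to $\bar G$ that reduce to $S$, with linear part $\varphi\mapsto P^{\varphi}$ an isomorphism of vector spaces. Identifying this (affine) set of tensors with the vector space of such $\varphi$ through the base point $\bar S$ then gives the statement. Most ingredients are already in place, so the proof is an assembly together with one injectivity computation.

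I would first record exhaustiveness. Any homogeneous structure associated to $\bar G$ comes from a reductive decomposition $\bar{\f{g}}=\bar{\f{m}}'\oplus\bar{\f{k}}$; as $\bar{\f{k}}$ is fixed, $\bar{\f{m}}'$ is an $Ad(\bar K)$-invariant complement, hence the graph $\bar{\f{m}}^{\psi}$ of a unique $Ad(\bar K)$-equivariant map $\psi:\bar{\f{m}}\rarrow\bar{\f{k}}$, and its tensor is $\bar S^{\psi}$ by (\ref{ecuacio S'}). By the equivalence recorded above, reduction to $S$ holds precisely when $\psi_{|\bar{\f{m}}^{h}}=0$, i.e. when $\psi$ is the zero extension of a map $\f{h}\rarrow\bar{\f{k}}$. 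Thus the reducing tensors are exactly the $\bar S^{\varphi}$, and the assignment is onto.

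Next come well-definedness and linearity of $P^{\varphi}$. Linearity is immediate from (\ref{Pfi}), where $\varphi$ appears exactly once in each of the three $B$-summands. For well-definedness as a global tensor, if $\Phi_{\bar a}(\bar x)=\Phi_{\bar a'}(\bar x)$ then $\bar a'=\bar a\bar k$ with $\bar k\in\bar K$, and the $Ad(\bar K)$-equivariance of $\varphi$ together with the $Ad(\bar K)$-invariance of $B$ and of the splitting $\bar{\f{m}}=\f{h}\oplus\bar{\f{m}}^{h}$ leaves each summand of (\ref{Pfi}) unchanged; hence $P^{\varphi}_{\bar y}$ is independent of the representative $\bar a$.

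The substantive step, and the one I expect to be the main obstacle, is injectivity: $P^{\varphi}=0\Rightarrow\varphi=0$. I would evaluate $P^{\varphi}$ at $\bar x$ on arguments split along $T_{\bar x}\bar M=V_{\bar x}\bar M\oplus H_{\bar x}\bar M$, using that $\f{h}$ is an ideal (normality of $H$), that $\f{h}$ and $\bar{\f{m}}^{h}$ are $ad(\bar{\f{k}})$-invariant and $B$-orthogonal, and that $ad(\bar{\f{k}})$ is $B$-skew. A short calculation shows that every component with a horizontal first slot, or with exactly one vertical slot, vanishes identically, while the two surviving components (first slot vertical, the remaining two either both horizontal or both vertical) both reduce to $B([\varphi(\bar\xi),\bar\eta],\bar\zeta)$. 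Hence $P^{\varphi}=0$ forces $ad(\varphi(\bar\xi))$ to annihilate all of $\bar{\f{m}}$ for every $\bar\xi\in\f{h}$. Because $\bar G$ acts effectively by isometries and an isometry fixing $\bar x$ with trivial differential is the identity, the isotropy representation $\bar K\to O(T_{\bar x}\bar M)$, and therefore its differential $\bar{\f{k}}\to\f{so}(T_{\bar x}\bar M)$ (compare (\ref{diagrama conmutativo Ad})), is faithful; thus $\varphi(\bar\xi)=0$ for all $\bar\xi$ and $\varphi=0$. Combining the three steps, $\varphi\mapsto\bar S^{\varphi}=\bar S+P^{\varphi}$ is a bijection onto the homogeneous structures reducing to $S$, affine over the linear isomorphism $\varphi\mapsto P^{\varphi}$, which is exactly the claimed identification. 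The delicate point is this last computation: it is the $B$-orthogonality of $\f{h}$ and $\bar{\f{m}}^{h}$ together with faithfulness of the isotropy action that prevents a nonzero $\varphi$ whose image centralizes only part of $\bar{\f{m}}$ from being invisible in $P^{\varphi}$.
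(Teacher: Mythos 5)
Your proposal follows the paper's own route for the substantive part: both parameterize the reductive complements containing $\bar{\mathfrak{k}}$ as graphs $\bar{\mathfrak{m}}^{\psi}$ of $Ad(\bar{K})$-equivariant maps, identify the reduction condition with $\psi|_{\bar{\mathfrak{m}}^{h}}=0$, and arrive at the formula $\bar{S}^{\varphi}=\bar{S}+P^{\varphi}$ of (\ref{ecuacio S'})--(\ref{Pfi}); the paper's ``proof'' is precisely the discussion preceding the Proposition. Where you go beyond the paper is the injectivity step $P^{\varphi}=0\Rightarrow\varphi=0$, which the paper does not address at all, and your argument for it is sound: the computation does collapse the surviving components to $B([\varphi(\bar{\xi}),\bar{\eta}],\bar{\zeta})$ (using that $ad(\bar{\mathfrak{k}})$ preserves $\bar{\mathfrak{m}}$, $\mathfrak{h}$ and $\bar{\mathfrak{m}}^{h}$ and is $B$-skew), so $P^{\varphi}=0$ forces $ad(\varphi(\bar{\xi}))|_{\bar{\mathfrak{m}}}=0$, and faithfulness of the isotropy representation (an isometry fixing $\bar{x}$ with trivial differential is the identity, so the kernel of $\bar{K}\to O(T_{\bar{x}}\bar{M})$ is trivial and hence so is the kernel of its differential) then kills $\varphi(\bar{\xi})$. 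This genuinely completes the claim that the map is an isomorphism rather than merely a surjective parameterization. The one point where you inherit rather than repair a weakness of the paper is the equivalence ``$\bar{S}^{\psi}$ reduces to $S$ iff $\psi|_{\bar{\mathfrak{m}}^{h}}=0$'': you cite it as ``recorded above'' exactly as the paper asserts it without proof, and the nontrivial direction (that a nonzero restriction of $\psi$ to $\bar{\mathfrak{m}}^{h}$ produces a reduced tensor different from $S$) is of the same nature as your injectivity computation one level down; it would be worth a sentence, but you are no worse off than the source here.
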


\bigskip

\subsection{Reduction in a principal bundle}\label{sec3.2}

We have noted in Remark \ref{remark normality} that the normality
of the group $H$ gives the invariance of the mechanical
connection. This implies that the connection form $\omega$ is
$Ad(\bar{G})$-equivariant, i.e.,
\begin{equation}\label{equivariance omega}
\Phi_{\bar{a}}^*\omega=Ad(\bar{a})\cdot\omega ,\qquad
\forall\bar{a}\in\bar{G},
\end{equation}
where
$Ad(\bar{a})\cdot\omega$ denotes the $1$-form in $\bar{M}$ with
values in $\f{h}$ given by
$$(Ad(\bar{a})\cdot\omega)(\bar{X})=Ad(\bar{a})(\omega(\bar{X})).$$
The canonical linear connection $\wbarnabla=\bar{\nabla}-\bar{S}$
of the reductive decomposition
$\bar{\f{g}}=\bar{\f{m}}\oplus\bar{\f{k}}$ at $\bar{x}$ is
characterized by the following property: for every
$\bar{\xi}\in\bar{\f{m}}$, the parallel displacement with respect
to $\wbarnabla$ along the curve
$\gamma(t)=\Phi_{\exp(t\bar{\xi})}(\bar{x})$, from $\bar{x}$ to
$\gamma(t)$, is equal to $(\Phi_{\exp(t\bar{\xi})})_*$ (see
\cite[Vol. II, Ch. X, Corollary 2.5 ]{KN}). Hence, infinitesimally
we have that
$$
\bigl(\wbarnabla_{\bar{X}}\omega\bigr)_{\bar{x}}=ad(\bar{\mu}^{-1}(\bar{X}))\cdot\omega_{\bar{x}},
\qquad \forall \bar{X}\in T_{\bar{x}}\bar{M},
$$
and by the invariance of $\wbarnabla$ by $\bar{G}$
\begin{equation}\label{nabla de omega}
\bigl(\wbarnabla_{\bar{X}}\omega\bigr)_{\bar{y}}=ad(\bar{\mu}_{\bar{y}}^{-1}(\bar{X}))\cdot\omega_{\bar{y}},
\qquad \forall \bar{y}\in\bar{M},\forall \bar{X}\in
T_{\bar{y}}\bar{M},
\end{equation}
that is, the covariant derivative of $\omega$ by the connection
$\wbarnabla$ is proportional to itself by a suitable linear
operator. We note that, in particular, if $H$ is contained in the
center of $\bar{G}$, the linear operator is null, hence $\omega$
is invariant by $\bar{G}$. If $H$ is just a normal subgroup not
contained in the center, condition \eqref{nabla de omega} comes
from the equivariance of $\omega$.


The preceding discussion suggests to study the reduction of
homogeneous structure tensors $\bar{S}$ in a principal bundle
without the use of the group $\bar{G}$. More precisely, in Theorem
\ref{Th1} the group $\bar{G}$ (and its reductive decomposition)
associated to the tensor $\bar{S}$ was a key ingredient. We now
begin with any tensor $\bar{S}$ in a manifold $(\bar{M},\bar{g})$
where a group $H$ acts by isometries (and such that $\bar{M}\to
\bar{M}/H=M$ is a principal bundle) satisfying Ambrose-Singer
equations and an additional algebraic condition for the
mechanical connection analogous to (\ref{nabla de omega}). Then
the tensor $\bar{S}$ can also be projected without using any
reductive decomposition as we can see in the following result.

\begin{theorem}\label{Th2}
Let $(\bar{M},\bar{g})$ be a Riemannian manifold. Let
$\pi:\bar{M}\rarrow M$ be a principal bundle with structure group
$H$ acting on $\bar{M}$ by isometries, and endowed with the
mechanical connection $\omega$. For every $H$-invariant
homogeneous Riemannian structure tensor $\bar{S}$ with canonical
linear connection $\wbarnabla$, if
\begin{equation}\label{condition nabla omega}
\wbarnabla\omega=\alpha\cdot\omega
\end{equation}
for certain 1-form $\alpha$ in $\bar{M}$ taking values in
$\mathrm{End}(\f{h})$, then the tensor field $S$ defined by
\begin{equation}\label{tensor reducido th2}
S_XY=\pi_*\left(\bar{S}_{X^H}Y^H\right)\qquad X,Y\in\f{X}(M)
\end{equation}
is a homogeneous Riemannian structure tensor in $(M,g)$, where $g$
is the reduced Riemannian metric.
\end{theorem}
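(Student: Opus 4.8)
The plan is to verify the three Ambrose--Singer equations $\wnabla g=0$, $\wnabla R=0$, $\wnabla S=0$ for $\wnabla=\nabla-S$ on $(M,g)$ by transferring each of them, through horizontal lifts, from the corresponding identity for $\wbarnabla=\bar{\nabla}-\bar{S}$ on $\bar{M}$, which holds because $\bar{S}$ is a homogeneous structure. First I check that $S$ is well defined: as $H$ acts by isometries it preserves the horizontal subspaces $H_{\bar{x}}\bar{M}$, so every horizontal lift $X^{H}$ is $H$-invariant; the $H$-invariance of $\bar{S}$ then makes $\bar{S}_{X^{H}}Y^{H}$ an $H$-invariant, hence projectable, field, and \eqref{tensor reducido th2} is meaningful. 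Subtracting the definition of $S$ from the Levi--Civita relation \eqref{levi-civita reducida} yields the basic bridge $\wnabla_{X}Y=\pi_{*}(\wbarnabla_{X^{H}}Y^{H})$.

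The decisive step is to read hypothesis \eqref{condition nabla omega} geometrically, as the statement that $\wbarnabla$ respects the splitting \eqref{descomp con mecanica} when one differentiates along horizontal directions. Evaluating $\wbarnabla\omega=\alpha\cdot\omega$ on a horizontal lift $Y^{H}$, where $\omega(Y^{H})=0$, gives $\omega(\wbarnabla_{\bar{X}}Y^{H})=0$, so $\wbarnabla_{\bar{X}}Y^{H}$ is horizontal; taking $\bar{X}=X^{H}$ it is moreover $H$-invariant and therefore a genuine horizontal lift, which refines the bridge to $\wbarnabla_{X^{H}}Y^{H}=(\wnabla_{X}Y)^{H}$. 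Dually, from $\wbarnabla\bar{g}=0$ and $\bar{g}(V_{\bar{x}}\bar{M},H_{\bar{x}}\bar{M})=0$ one obtains $\bar{g}(\wbarnabla_{X^{H}}\bar{V},W^{H})=0$ for every vertical $\bar{V}$, so $\wbarnabla_{X^{H}}$ also carries vertical fields to vertical fields. Thus $\wbarnabla_{X^{H}}$ commutes with horizontal lifting and with the projections onto $V_{\bar{x}}\bar{M}$ and $H_{\bar{x}}\bar{M}$; this is the exact replacement for the normality used in Theorem \ref{Th1}.

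With this in hand $\wnabla g=0$ is immediate from \eqref{metrica reducida}, $\wbarnabla\bar{g}=0$ and the refined bridge. For $\wnabla S=0$ I compute $(\wnabla_{X}S)_{Y}Z$ by lifting; the only subtlety is that $\bar{S}_{Y^{H}}Z^{H}$ need not be horizontal, so $(S_{Y}Z)^{H}=(\bar{S}_{Y^{H}}Z^{H})^{h}$. However, by the previous paragraph $\wbarnabla_{X^{H}}$ sends the vertical part $(\bar{S}_{Y^{H}}Z^{H})^{v}$ to a vertical field, which $\pi_{*}$ annihilates; hence $\wnabla_{X}(S_{Y}Z)=\pi_{*}(\wbarnabla_{X^{H}}(\bar{S}_{Y^{H}}Z^{H}))$, and expanding this with $\wbarnabla\bar{S}=0$ and $\wbarnabla_{X^{H}}Y^{H}=(\wnabla_{X}Y)^{H}$ gives $(\wnabla_{X}S)_{Y}Z=\pi_{*}\big((\wbarnabla_{X^{H}}\bar{S})_{Y^{H}}Z^{H}\big)=0$.

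The main obstacle is $\wnabla R=0$, since the Levi--Civita curvature of the base is not merely the projection of that of $\bar{M}$. I would first pass to the curvature $R^{\wnabla}$ of $\wnabla$: because $R-R^{\wnabla}$ is a universal algebraic expression in $S$ and $\wnabla S$, and we have already established $\wnabla S=0$ and $\wnabla g=0$, this difference is $\wnabla$-parallel, so $\wnabla R=\wnabla R^{\wnabla}$; the identical reduction on $\bar{M}$ turns the Ambrose--Singer equations for $\bar{S}$ into $\wbarnabla R^{\wbarnabla}=0$. Lifting $R^{\wnabla}(X,Y)Z$ and using $[X,Y]^{H}=[X^{H},Y^{H}]^{h}$ produces $(R^{\wnabla}(X,Y)Z)^{H}=R^{\wbarnabla}(X^{H},Y^{H})Z^{H}+\wbarnabla_{[X^{H},Y^{H}]^{v}}Z^{H}$, the extra term recording the vertical bracket, i.e. the curvature of the mechanical connection and the non-integrability of the horizontal distribution. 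The projected part $\rho(X,Y)Z:=\pi_{*}(R^{\wbarnabla}(X^{H},Y^{H})Z^{H})$ is $\wnabla$-parallel by exactly the computation used for $\wnabla S=0$, now with $\wbarnabla R^{\wbarnabla}=0$ in place of $\wbarnabla\bar{S}=0$. Everything therefore reduces to showing that the O'Neill-type correction $C(X,Y)Z:=\pi_{*}(\wbarnabla_{[X^{H},Y^{H}]^{v}}Z^{H})$ is itself $\wnabla$-parallel, and I expect this to be the crux of the proof: writing $[X^{H},Y^{H}]^{v}$ as the fundamental field of $-\Omega(X^{H},Y^{H})$, with $\Omega$ the curvature of $\omega$, I would express $C$ through $\Omega$ and the tensor $A\mapsto\pi_{*}(\wbarnabla_{A^{*}}Z^{H})$, and then derive $\wnabla C=0$ from the $H$-invariance of $\wbarnabla$ and $\bar{S}$ together with the differential identity \eqref{condition nabla omega}, which is precisely what governs $\wbarnabla\Omega$ and the behaviour of $\wbarnabla$ along the fibres.
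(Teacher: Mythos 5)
Your proposal follows the paper's own route: the same well-definedness argument, the same bridge $(\wnabla_XY)^H=\wbarnabla_{X^H}Y^H$ extracted from \eqref{condition nabla omega}, the same verification of the first and third Ambrose--Singer equations by lifting, and the same decomposition of the lifted curvature into $\widetilde{\bar{R}}_{X^HY^H}Z^H$ plus the O'Neill correction $\wbarnabla_{[X^H,Y^H]^v}Z^H$. Your observation that $\wbarnabla_{X^H}$ preserves both factors of the splitting \eqref{descomp con mecanica} (vertical to vertical by $\wbarnabla\bar{g}=0$, horizontal lifts to horizontal lifts by the hypothesis) is a clean way to package what the paper uses implicitly, and your handling of the possible vertical part of $\bar{S}_{Y^H}Z^H$ in the third equation is correct. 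The reduction of $\wnabla R=0$ to $\wnabla\widetilde{R}=0$ via the algebraic dependence of $R-\widetilde{R}$ on $S$ and $\wnabla S$ is also fine, and is exactly why the paper states the equations in the form \eqref{ambrose-singer equations 2}.

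The gap is that you stop precisely where the paper does its real work: you assert, but do not prove, that the correction term $C(X,Y)Z=\pi_*\bigl(\wbarnabla_{[X^H,Y^H]^v}Z^H\bigr)$ is $\wnabla$-parallel, and the ingredients you name do not yet amount to a proof. What is actually needed is: (i) the identity $\Omega(X^H,Y^H)=\omega\bigl(\widetilde{\bar{T}}_{X^H}Y^H\bigr)$ with $\widetilde{\bar{T}}_{X^H}Y^H=\bar{S}_{Y^H}X^H-\bar{S}_{X^H}Y^H$, which together with \eqref{condition nabla omega} and $\wbarnabla\bar{S}=0$ gives $(\wbarnabla_{U^H}\Omega)(X^H,Y^H)=\alpha(U^H)\cdot\Omega(X^H,Y^H)$ --- note that $\Omega$ is \emph{not} $\wbarnabla$-parallel in general, only proportional to itself, so the cancellation is not automatic; (ii) Koszul's formula (using $[X^H,\xi^*]=0$) to evaluate $\bar{g}\bigl(\wbarnabla_{\Omega(X^H,Y^H)^*}Z^H,W^H\bigr)$ as $\tfrac12\bb{I}\bigl(\Omega(X^H,Y^H),\Omega(Z^H,W^H)\bigr)-\bar{S}_{\Omega(X^H,Y^H)^*Z^HW^H}$, where $\bb{I}(\xi,\eta)=\bar{g}(\xi^*,\eta^*)$ is the fibre metric on $\f{h}$ --- this is where the geometry of the fibres enters, invisibly in your sketch; (iii) the derivative $U^H\bigl(\bb{I}(\Omega(X^H,Y^H),\Omega(Z^H,W^H))\bigr)$ computed through $\omega\bigl(\wbarnabla_{U^H}[X^H,Y^H]^v\bigr)$; and (iv) the identity $\Omega(\wbarnabla_{U^H}X^H,Y^H)^*+\Omega(X^H,\wbarnabla_{U^H}Y^H)^*=\wbarnabla_{U^H}\Omega(X^H,Y^H)^*$. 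Only once all of these are in place do the $\alpha$-terms and the $\bb{I}$-terms cancel pairwise. Your roadmap points at the right objects, so I do not believe the approach fails, but as written the second Ambrose--Singer equation is not established.
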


\begin{proof}
First note that $H$-invariance of $\bar{S}$ implies that
$\bar{S}_{X^H}Y^H$ is projectable and then $S$ is well defined.
Since the structure group $H$ acts by isometries, the Levi-Civita
connection $\bar{\nabla}$ of $\bar{g}$ is $H$-invariant, which
implies that $\wbarnabla=\bar{\nabla}-\bar{S}$ is also
$H$-invariant. Now from condition (\ref{condition nabla omega}) we
have that for all $X,Y\in\f{X}(M)$
$$\omega(\wbarnabla_{X^H}Y^H)=X^H\left(\omega(Y^H)\right)-
\left(\wbarnabla_{X^H}\omega\right)(Y^H)=-\alpha(X^H)\cdot\omega(Y^H)=0,$$
so that $\wbarnabla_{X^H}Y^H$ is horizontal. If we define
$\widetilde{\nabla}=\nabla-S$, $\nabla$ being the Levi-Civita
connection of $g$, then $\wbarnabla_{X^H}Y^H$ projects to
$\widetilde{\nabla}_{X^H}Y^H$. Hence by $H$-invariance,
\begin{equation}\label{wbarnabla de X^H Y^H}
\left(\widetilde{\nabla}_XY\right)^H=\wbarnabla_{X^H}Y^H.
\end{equation}
We now prove that $S$ satisfies Ambrose-Singer equations
(equivalent to those in (\ref{ambrose-singer equations})):
\begin{equation}\label{ambrose-singer equations 2}
\widetilde{\nabla}g=0,\quad\widetilde{\nabla}\widetilde{R}=0,\quad\widetilde{\nabla}S=0,
\end{equation} where $\widetilde{R}$ is the curvature tensor of
$\widetilde{\nabla}$ and $\widetilde{R}$ and $S$ are seen as
$(0,4)$ and $(0,3)$ tensors respectively by lowering their
contravariant index with respect to $g$.

For the first equation, taking into account (\ref{wbarnabla de X^H
Y^H}), we have for $U,X,Y\in\f{X}(M)$
\begin{eqnarray*}
\left(\wnabla_Ug\right)(X,Y)\circ\pi & = &
U\left(g(X,Y)\right)\circ\pi-g(\wnabla_UX,Y)\circ\pi-g(X,\wnabla_UY)\circ\pi\\
& = &
U^H\left(\bar{g}(X^H,Y^H)\right)-\bar{g}\left((\wnabla_UX)^H,Y^H\right)-\bar{g}\left(X^H,(\wnabla_UY)^H\right)\\
& = & U^H\left(\bar{g}(X^H,Y^H)\right)-\bar{g}\left(\wbarnabla_{U^H}X^H,Y^H\right)-\bar{g}\left(X^H,\wbarnabla_{U^H}Y^H\right)\\
& = & \left(\wbarnabla_{U^H}\bar{g}\right)(X^H,Y^H)
\end{eqnarray*}
and then since $\wbarnabla\bar{g}=0$ we have $\wnabla g=0$.

For the third equation, let $U,X,Y,Z\in\f{X}(M)$. Then, again by
(\ref{wbarnabla de X^H Y^H}), we have
\begin{eqnarray*}
\left(\wnabla_US\right)_{XYZ}\circ\pi & = &
U\left(S_{XYZ}\right)\circ\pi-\left(S_{\wnabla_UXYZ}\right)\circ\pi\\
& & -\left(S_{X\wnabla_UYZ}\right)\circ\pi-\left(S_{XY\wnabla_UZ}\right)\circ\pi\\
& = & U^H\left(\bar{S}_{X^HY^HZ^H}\right)-\bar{S}_{(\wnabla_UX)^HY^HZ^H}\\
& & -\bar{S}_{X^H(\wnabla_UY)^HZ^H}-\bar{S}_{X^HY^H(\wnabla_UZ)^H}\\
& = & U^H\left(\bar{S}_{X^HY^HZ^H}\right)-\bar{S}_{\wbarnabla_{U^H}X^HY^HZ^H}\\
& & -\bar{S}_{X^H\wbarnabla_{U^H}Y^HZ^H}-\bar{S}_{X^HY^H\wnabla_{U^H}Z^H}\\
& = & \left(\wbarnabla_{U^H}\bar{S}\right)_{X^HY^HZ^H}
\end{eqnarray*}
which vanishes as $\wbarnabla\bar{S}=0$.

We now prove the second Ambrose-Singer equation. Let
$\widetilde{\bar{R}}$ be the curvature tensor of $\wbarnabla$.
From equation (\ref{wbarnabla de X^H Y^H}), for $X,Y,Z\in\f{X}(M)$
we first have
\begin{eqnarray*}
(\widetilde{R}_{XY}Z)^H & = & \wbarnabla_{X^H}(\wnabla_YZ)^H-\wbarnabla_{Y^H}(\wnabla_XZ)^H-\wbarnabla_{[X,Y]^H}Z^H\\
 & = & \wbarnabla_{X^H}(\wbarnabla_{Y^H}Z^H)-\wbarnabla_{Y^H}(\wbarnabla_{X^H}Z^H)-\wbarnabla_{[X^H,Y^H]^h}Z^H\\
 & = & \wbarnabla_{X^H}(\wbarnabla_{Y^H}Z^H)-\wbarnabla_{Y^H}(\wbarnabla_{X^H}Z^H)-\wbarnabla_{[X^H,Y^H]}Z^H+\wbarnabla_{[X^H,Y^H]^v}Z^H\\
 & = & \widetilde{\bar{R}}_{X^HY^H}Z^H+\wbarnabla_{[X^H,Y^H]^v}Z^H.
\end{eqnarray*}
We shall also denote by $\widetilde{\bar{R}}$ the $(0,4)$ tensor field associated
to $\widetilde{\bar{R}}$
obtained by lowering the contravariant index with respect to $\bar{g}$. Then for
$X,Y,Z,W\in\f{X}(M)$ one has
\begin{eqnarray}
\widetilde{R}_{XYZW}\circ \pi & = &
\widetilde{\bar{R}}_{X^HY^HZ^HW^H}+\bar{g}\left(\wbarnabla_{[X^H,Y^H]^v}Z^H,W^H\right) \label{Rtilde}\\
& = &
\widetilde{\bar{R}}_{X^HY^HZ^HW^H}-\bar{g}\left(\wbarnabla_{\Omega(X^H,Y^H)^*}Z^H,W^H\right)\nonumber,
\end{eqnarray}
where $\Omega(X^H,Y^H)^*$ is the fundamental vector field
associated to $\Omega(X^H,Y^H)\in\f{h}$. For any
$\bar{x}\in\bar{M}$, let $\mathbb{I}(\bar{x})$ the bilinear form
in $\f{h}$ defined as
$$\bb{I}(\bar{x})(\xi,\eta)=\bar{g}(\xi^*_{\bar{x}},\eta^*_{\bar{x}}),\qquad \forall\xi,\eta\in\f{h}.$$
Applying Koszul's formula for $\bar{\nabla}$ and taking into
account that $[X^H,\xi ^*]=0$ for any $X\in \mathfrak{X}(M)$, $\xi
\in \f{h}$, we have
\begin{eqnarray*}
\bar{g}\left(\wbarnabla_{\Omega(X^H,Y^H)^*}Z^H,W^H\right) & = &
\bar{g}\left(\bar{\nabla}_{\Omega(X^H,Y^H)^*}Z^H,W^H\right)-\bar{g}\left(\bar{S}_{\Omega(X^H,Y^H)^*}Z^HW^H\right)\\
& = &
\frac{1}{2}\bb{I}\left(\Omega(X^H,Y^H),\Omega(Z^H,W^H)\right)-\bar{S}_{\Omega(X^H,Y^H)^*Z^HW^H},
\end{eqnarray*}
where, as usual, $\wbarnabla=\bar{\nabla}-\bar{S}$. Then applying
the previous equation and equation (\ref{Rtilde}), a direct
computation shows that
\begin{eqnarray}\label{formulote curvatura}
\left(\wnabla_U\widetilde{R}\right)_{XYZW}\circ\pi& = &
\left(\wbarnabla_{U^H}\widetilde{\bar{R}}\right)_{X^HY^HZ^HW^H}\nonumber\\
& -&
\frac{1}{2}U^H\left(\bb{I}(\Omega(X^H,Y^H),\Omega(Z^H,W^H))\right)\nonumber\\
& +& \frac{1}{2}\bb{I}\left(\Omega(\wbarnabla_{U^H}X^H,Y^H),\Omega(Z^H,W^H)\right)\nonumber\\
& + &
\frac{1}{2}\bb{I}\left(\Omega(X^H,\wbarnabla_{U^H}Y^H),\Omega(Z^H,W^H)\right)\nonumber\\
& + & \frac{1}{2}\bb{I}\left(\Omega(X^H,Y^H),\Omega(\wbarnabla_{U^H}Z^H,W^H)\right)\\
& + &
\frac{1}{2}\bb{I}\left(\Omega(X^H,Y^H),\Omega(Z^H,\wbarnabla_{U^H}W^H)\right)\nonumber\\
& + &
U^H\left(\bar{S}_{\Omega(X^H,Y^H)^*Z^HW^H}\right)-\bar{S}_{\Omega(\wbarnabla_{U^H}X^H,Y^H)^*Z^HW^H}\nonumber\\
& - &
\bar{S}_{\Omega(X^H,\wbarnabla_{U^H}Y^H)^*Z^HW^H}-\bar{S}_{\Omega(X^H,Y^H)^*(\wbarnabla_{U^H}Z^H)W^H}\nonumber\\
& - & \bar{S}_{\Omega(X^H,Y^H)^*Z^H(\wbarnabla_{U^H}W^H)}.\nonumber
\end{eqnarray}
On the other hand, by (\ref{condition nabla omega})
$$0=\left(\wbarnabla_{X^H}\omega\right)(Y^H)-\left(\wbarnabla_{Y^H}\omega\right)(X^H)=
d\omega(X^H,Y^H)-\omega\left(\widetilde{\bar{T}}_{X^H}Y^H\right),$$
where $\widetilde{\bar{T}}$ is the torsion tensor field of
$\wbarnabla$. Then, since by definition $\Omega(\bar{X},\bar{Y})=d\omega(\bar{X}^h,\bar{Y}^h)$, we have
$$\Omega(X^H,Y^H)=\omega\left(\widetilde{\bar{T}}_{X^H}Y^H\right).$$
Using that
$$\widetilde{\bar{T}}_{X^H}Y^H=\bar{S}_{Y^H}X^H-\bar{S}_{X^H}Y^H,$$
and conditions (\ref{condition nabla omega}) and $\wbarnabla\bar{S}=0$, one has that
\begin{equation}\label{condition nabla Omega}
\left(\wbarnabla_{U^H}\Omega\right)(X^H,Y^H)=\alpha(U^H)\cdot\Omega(X^H,Y^H).
\end{equation}
Now, from $\omega([X^H,Y^H]^v)=-\Omega(X^H,Y^H)$ and
(\ref{condition nabla omega}) we get
\begin{equation}
\label{conditionn}
\omega\left(\wbarnabla_{U^H}[X^H,Y^H]^v\right)=-U^H\left(\Omega(X^H,Y^H)\right)+\alpha(U^H)\cdot\Omega(X^H,Y^H),
\end{equation}
and hence we have
\begin{eqnarray*}
U^H\left(\bb{I}\left(\Omega(X^H,Y^H),\Omega(Z^H,W^H)\right)\right)
& = &
\bar{g}\left(\wbarnabla_{U^H}[X^H,Y^H]^v,[Z^H,W^H]^v\right)\\
& + &
\bar{g}\left([X^H,Y^H]^v,\wbarnabla_{U^H}[Z^H,W^H]^v\right)\\
& = & \bb{I}\left(U^H\Omega(X^H,Y^H),\Omega(Z^H,W^H)\right)\\
& - &
\bb{I}\left(\alpha(U^H)\cdot\Omega(X^H,Y^H),\Omega(Z^H,W^H)\right)\\
& + & \bb{I}\left(\Omega(X^H,Y^H),U^H\Omega(Z^H,W^H)\right)\\
& - &
\bb{I}\left(\Omega(X^H,Y^H),\alpha(U^H)\cdot\Omega(Z^H,W^H)\right).
\end{eqnarray*}
In addition, by (\ref{condition nabla Omega}) and
(\ref{conditionn})
$$\Omega(\wbarnabla_{U^H}X^H,Y^H)+\Omega(X^H,\wbarnabla_{U^H}Y^H)=-\omega\left(\wbarnabla_{U^H}[X^H,Y^H]^v\right),$$
so
\begin{equation}
\Omega(\wbarnabla_{U^H}X^H,Y^H)^*+\Omega(X^H,\wbarnabla_{U^H}Y^H)^*=\wbarnabla_{U^H}\Omega(X^H,Y^H)^*
\label{kondition}
\end{equation}
since $\wbarnabla_{U^H}[X^H,Y^H]^v$ is vertical. Substituting the
preceding formulas and grouping terms, (\ref{formulote curvatura})
becomes
\begin{eqnarray*}
\left(\wnabla_U\widetilde{R}\right)_{XYZW}\circ\pi& = &
\left(\wbarnabla_{U^H}\widetilde{\bar{R}}\right)_{X^HY^HZ^HW^H}\\
& + &
\frac{1}{2}\bb{I}\left((\wbarnabla_{U^H}\Omega)(X^H,Y^H),\Omega(Z^H,W^H)\right)\\
& - &
\frac{1}{2}\bb{I}\left(\alpha(U^H)\cdot\Omega(X^H,Y^H),\Omega(Z^H,W^H)\right)\\
& + & \frac{1}{2}\bb{I}\left(\Omega(X^H,Y^H),(\wbarnabla_{U^H}\Omega)(Z^H,W^H)\right)\\
& - & \frac{1}{2}\bb{I}\left(\Omega(X^H,Y^H),\alpha(U^H)\cdot\Omega(Z^H,W^H)\right)\\
& - &
\left(\wbarnabla_{U^H}\bar{S}\right)_{\Omega(X^H,Y^H)^*Z^HW^H},\\
\end{eqnarray*}
from where, taking into account (\ref{condition nabla Omega}) and
\eqref{kondition}, we deduce that $\wnabla_U\widetilde{R}=0$. This
finishes the proof of Theorem \ref{Th2}.
\end{proof}

\bigskip

\begin{remark}
\emph{In the situation of Theorem \ref{Th2}, in the case $\bar{S}$
is a homogeneous structure tensor associated to a Lie group $\bar{G}$ acting by
isometries in $\bar{M}$, one could ask if $H$ can be seen as a
normal subgroup of $\bar{G}$ and if the projected tensor $S$ is
associated to the group $G=\bar{G}/H$. The answer is not
necessarily positive. More precisely, for a connected, simply
connected and complete manifold $\bar{M}$, if we construct the
group $\bar{G}$ from $\bar{S}$ following the proof of
Ambrose-Singer Theorem (as in \cite{TV}), one can see that the
normality of $H$ is not guaranteed and the group $\bar{G}$ needs
not project to the group $G$ constructed in $M$ from $S$ by the
same method. An example of this situation will be shown in \S
\ref{sectHopf} (Hopf fibration case $\lambda =0$).}
\end{remark}

\begin{remark}
\emph{The algebraic condition  \eqref{condition nabla omega} for
$\alpha =0$ is an invariance condition and can be implemented in
Ambrose-Singer conditions as in Kiri\v{c}enko's theorem (see
\cite{Kir}). This situation can be found in the last section of
the present paper in the framework of almost contact metric
homogeneous structures, where this condition is automatically
satisfied. Note that for non trivial $\alpha$, the situation would
require an equivariant version of this theorem.}
\end{remark}

\subsection{Reduction and homogeneous classes}

In the situation of Theorem \ref{Th2}:

\begin{proposition}\label{conservation}
The classes $\{0\}$, $\mathcal{S}_1$, $\mathcal{S}_3$,
$\mathcal{S}_1\oplus\mathcal{S}_2$ and
$\mathcal{S}_1\oplus\mathcal{S}_3$ are invariant under the
reduction procedure.
\end{proposition}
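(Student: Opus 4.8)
The plan is to base everything on a single pointwise identity between $S$ and $\bar{S}$ viewed as $(0,3)$-tensors. Starting from the reduction formula (\ref{tensor reducido th2}) and using (\ref{metrica reducida}) together with (\ref{pay}), I would first establish that for all $X,Y,Z\in\f{X}(M)$
\begin{equation*}
S_{XYZ}\circ\pi=g(S_XY,Z)\circ\pi=\bar{g}\left((\bar{S}_{X^H}Y^H)^h,Z^H\right)=\bar{S}_{X^HY^HZ^H},
\end{equation*}
the last equality holding because $Z^H$ is horizontal. This identity is the whole engine: it exhibits the $(0,3)$-tensor $S$ as the restriction of $\bar{S}$ to horizontal lifts, so any purely tensorial (fibrewise) relation satisfied by $\bar{S}$ on horizontal arguments will descend to $S$ once one checks that the relevant auxiliary objects descend.

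For the classes defined by symmetry relations with no metric contraction, the descent is immediate. If $\bar{S}=0$ the identity gives $S=0$. If $\bar{S}\in\mathcal{S}_3$, so that $\bar{S}_{\bar{X}\bar{Y}\bar{Z}}+\bar{S}_{\bar{Y}\bar{X}\bar{Z}}=0$, evaluating on horizontal lifts and applying the identity yields $S_{XYZ}+S_{YXZ}=0$, hence $S\in\mathcal{S}_3$. Similarly, if $\bar{S}\in\mathcal{S}_1\oplus\mathcal{S}_2$, i.e. $\underset{\bar{X}\bar{Y}\bar{Z}}{\f{S}}\bar{S}_{\bar{X}\bar{Y}\bar{Z}}=0$, then because cyclically permuting $X,Y,Z$ corresponds to cyclically permuting their horizontal lifts, the identity gives $\underset{XYZ}{\f{S}}S_{XYZ}=0$ and $S\in\mathcal{S}_1\oplus\mathcal{S}_2$.

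The classes $\mathcal{S}_1$ and $\mathcal{S}_1\oplus\mathcal{S}_3$ require manufacturing a $1$-form on $M$. Suppose $\bar{S}\in\mathcal{S}_1$, with $\bar{S}_{\bar{X}\bar{Y}\bar{Z}}=\bar{g}(\bar{X},\bar{Y})\bar{\varphi}(\bar{Z})-\bar{g}(\bar{X},\bar{Z})\bar{\varphi}(\bar{Y})$ for some $\bar{\varphi}\in\Gamma(T^*\bar{M})$. I would recover $\bar{\varphi}$ as the metric trace $\bar{\varphi}=\tfrac{1}{\dim\bar{M}-1}c_{12}(\bar{S})$; since $\bar{S}$ and $\bar{g}$ are $H$-invariant, so is $\bar{\varphi}$, and since the horizontal lift $Z^H$ is $H$-invariant the function $\bar{\varphi}(Z^H)$ is constant along the fibres and descends to a well-defined $1$-form $\varphi$ on $M$ by $\varphi(Z)\circ\pi=\bar{\varphi}(Z^H)$. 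Substituting the defining expression of $\bar{S}$ into the reduction identity and factoring the metric factors through (\ref{metrica reducida}) then yields $S_{XYZ}=g(X,Y)\varphi(Z)-g(X,Z)\varphi(Y)$, so $S\in\mathcal{S}_1$. For $\mathcal{S}_1\oplus\mathcal{S}_3$ the same computation is run on the symmetrization $S_{XYZ}+S_{YXZ}$; the $\mathcal{S}_3$-component is traceless, so $\bar{\varphi}$ is again determined by $c_{12}(\bar{S})$ and the identical descent argument applies.

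The conceptual heart of the statement, and the reason $\mathcal{S}_2$ and $\mathcal{S}_2\oplus\mathcal{S}_3$ do not appear, is the behaviour of $c_{12}$ under reduction, which I expect to be the main obstacle. Splitting an orthonormal basis of $T_{\bar{x}}\bar{M}$ into a horizontal part (the lift of an orthonormal basis of $T_xM$) and a vertical part $\{v_\alpha\}$ gives $c_{12}(\bar{S})(Z^H)=c_{12}(S)(Z)\circ\pi+\sum_\alpha\bar{S}_{v_\alpha v_\alpha Z^H}$, so $c_{12}(\bar{S})=0$ does not force $c_{12}(S)=0$: the vertical trace, governed by the geometry of the fibres, is exactly the obstruction. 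For the five listed classes this never intervenes, since for $\mathcal{S}_3$ trace-freeness is a formal consequence of the skew-symmetry that already descends, while the remaining conditions involve either a cyclic sum or a $1$-form that descend on their own. By contrast $\mathcal{S}_2$ and $\mathcal{S}_2\oplus\mathcal{S}_3$ impose $c_{12}=0$ as an independent requirement that the vertical contribution can spoil. Verifying this dichotomy class by class is the delicate part of the argument.
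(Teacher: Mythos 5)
Your argument is correct and is essentially the paper's own: both hinge on the pointwise identity $S_{XYZ}\circ\pi=\bar S_{X^HY^HZ^H}$, descend the purely algebraic defining conditions directly, and descend a $1$-form for the $\mathcal{S}_1$-type classes --- the paper does the latter by projecting the $H$-invariant dual vector field $\bar\xi$, while you recover $\bar\varphi$ as $\tfrac{1}{\dim\bar M-1}c_{12}(\bar S)$, an equivalent device. If anything you are more explicit than the paper on $\mathcal{S}_1\oplus\mathcal{S}_3$ (which it lumps with $\mathcal{S}_3$ as ``clearly preserved'' despite the existentially quantified $\varphi$), and your closing computation of the vertical contribution to the trace is exactly the paper's formula \eqref{formula c12} leading to Proposition \ref{proposicion minimal}.
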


\begin{proof}
By the expression of the reduced structure tensor (\ref{tensor
reducido th2}) it is obvious that if $\bar{S}=0$ then $S=0$. Let
$\bar{S}\in \mathcal{S}_1$ given by the expression
$$\bar{S}_{\bar{X}\bar{Y}\bar{Z}}=\bar{g}(\bar{X},\bar{Y})\bar{g}(\bar{\xi},\bar{Z})
-\bar{g}(\bar{Y},\bar{\xi})\bar{g}(\bar{X},\bar{Z})$$
where $\bar{\xi}$ is a vector field parallel with respect to
$\wbarnabla$. Since $\bar{S}$ is $H$-invariant the vector field
$\bar{\xi}$ is also $H$-invariant, and then projectable. Let $\xi$
be the projection of $\bar{\xi}$ we have $\xi^H=\bar{\xi}^h$ and
then
\begin{eqnarray*}
S_{XYZ}\circ\pi & = &
\bar{g}(X^H,Y^H)\bar{g}(\bar{\xi},Z^H)-\bar{g}(Y^H,\bar{\xi})\bar{g}(X^H,Z^H)\\
& = &
\bar{g}(X^H,Y^H)\bar{g}(\xi^H,Z^H)-\bar{g}(Y^H,\xi^H)\bar{g}(X^H,Z^H)\\
& = & g(X,Y)g(\xi,Z)\circ\pi-g(Y,\xi)g(X,Z)\circ\pi
\end{eqnarray*}
hence $S\in\mathcal{S}_1$. With a similar argument one proves that
the class $\mathcal{S}_1\oplus\mathcal{S}_2$ is also invariant.
For the classes $\mathcal{S}_3$ and
$\mathcal{S}_1\oplus\mathcal{S}_3$, they are characterized by
algebraic conditions clearly preserved by the reduction formula
(\ref{tensor reducido th2}).
\end{proof}

\bigskip

The other two classes $\mathcal{S}_2$ and
$\mathcal{S}_2\oplus\mathcal{S}_3$ are characterized by the
vanishing of the trace $c_{12}$. Let $x\in M$ and
$\{e_i\}_{i=1,...,n}$ be an orthonormal base of $T_x M$, then for
$X\in T_x M$
\begin{equation}\label{formula c12}c_{12}(S)(X)=\sum_i S_{e_ie_iX}=
\sum_i \bar{S}_{e_i^He_i^HX^H}=c_{12}(\bar{S})(X^H)-\sum_j\bar{S}_{V_jV_jX^H},
\end{equation}
where $\{V_j\}_{j=1,...,r}$ is an orthonormal basis of the
vertical subspace $V_{\bar{x}}\bar{M}$ , $\bar{x}\in \pi
^{-1}(x)$. From $\wbarnabla=\wnabla-\bar{S}$ one has
$$\bar{S}_{V_jV_jX^H}=\bar{g}(\wnabla_{V_j}V_j,X^H)-\bar{g}(\wbarnabla_{V_j}V_j,X^H)=
-\bar{g}(\wnabla_{V_j}X^H,V_j)+\bar{g}(\wbarnabla_{V_j}X^H,V_j),$$
where the vectors $V_j$, $j=1,...,r$, are extended to unitary and
respectively orthogonal vertical vector fields. As from
\eqref{condition nabla omega} we have
$$\omega(\wbarnabla_{V_j}X^H)=V_j(\omega(X^H))-\alpha(V_j)\cdot\omega(X^H)=0,$$
the second summand in the formula for $\bar{S}_{V_jV_jX^H}$ is
zero, and then
$$\bar{S}_{V_jV_jX^H}=-\bar{g}(\wnabla_{V_j}X^H,V_j)=\bar{g}(B(V_j,V_j),X^H),$$
where $B$ denotes the second fundamental form of the fibre $\pi
^{-1}(x)$ at $\bar{x}$. Inserting this in (\ref{formula c12}) we
obtain that
$$c_{12}(S)(X)=c_{12}(\bar{S})(X^H)-\sum_j\bar{g}(B(V_j,V_j),X^H)=
c_{12}(\bar{S})(X^H)-\bar{g}(\mathrm{H},X^H)$$ where $\mathrm{H}$
denotes the mean curvature operator (trace of $B$) of the fibre at
$\bar{x}$. We have proved the following.

\begin{proposition}\label{proposicion minimal}
The classes $\mathcal{S}_2$ and $\mathcal{S}_2\oplus\mathcal{S}_3$
are invariant under reduction if and only if the fibres of the
principal bundle $\pi: (\bar{M},\bar{g})\to(M,g)$ are minimal
Riamannian sub-manifolds of $(\bar{M},\bar{g})$.
\end{proposition}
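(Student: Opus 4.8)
The plan is to read the desired equivalence directly off the trace identity established just above the statement, namely
$$c_{12}(S)(X)=c_{12}(\bar{S})(X^H)-\bar{g}(\mathrm{H},X^H),$$
and to combine it with the fact that the two classes in question are cut out by the vanishing of the contraction $c_{12}$. For $\mathcal{S}_2\oplus\mathcal{S}_3$ this is the entire characterisation; for $\mathcal{S}_2$ there is the extra cyclic condition $\underset{XYZ}{\f{S}}S_{XYZ}=0$, so I would first observe that this condition is automatically inherited by $S$ from $\bar{S}$. Indeed, as recorded in \eqref{formula c12}, the reduced $(0,3)$-tensor satisfies $S_{XYZ}\circ\pi=\bar{S}_{X^HY^HZ^H}$, so the cyclic sum of $S$ is the horizontal lift of the cyclic sum of $\bar{S}$; hence $\underset{XYZ}{\f{S}}S_{XYZ}=0$ whenever the corresponding cyclic sum of $\bar{S}$ vanishes. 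Thus in both cases membership of $S$ in the class reduces to checking $c_{12}(S)=0$.

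Next I would use that the mean curvature vector $\mathrm{H}=\sum_j B(V_j,V_j)$ of a fibre is horizontal, being the trace of the second fundamental form, which takes values in the normal (i.e.\ horizontal) bundle of the fibre. Assuming $\bar{S}$ lies in one of the two classes, so that $c_{12}(\bar{S})=0$, the displayed identity collapses to $c_{12}(S)(X)=-\bar{g}(\mathrm{H},X^H)$. Since $X\mapsto X^H$ surjects onto the horizontal subspace at each point and $\mathrm{H}$ is itself horizontal, the right-hand side vanishes for all $X$ if and only if $\mathrm{H}=0$ at the point $\bar{x}$. Therefore $c_{12}(S)\equiv0$ on $M$ if and only if $\mathrm{H}$ vanishes on every fibre, that is, every fibre is a minimal submanifold of $(\bar{M},\bar{g})$.

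Finally I would assemble the two implications. If all fibres are minimal then $\mathrm{H}=0$ everywhere, so $c_{12}(S)(X)=c_{12}(\bar{S})(X^H)=0$ and the class is preserved; conversely, preservation of the class forces $c_{12}(S)=0$, whence $\bar{g}(\mathrm{H},X^H)=0$ for all $X$ and hence $\mathrm{H}=0$. The only point requiring care is the logical structure of the converse: one must be sure the obstruction $\bar{g}(\mathrm{H},\cdot)$ does not depend on the chosen $\bar{S}$ within the class. This is exactly what the identity guarantees, since the sole $\bar{S}$-dependence sits in the term $c_{12}(\bar{S})$, which vanishes by assumption. Everything else is the bookkeeping already carried out in deriving the trace formula, so no further substantial computation is needed; the genuine content of the proposition is precisely the geometric reading of the mean-curvature term as the obstruction to preserving the classes $\mathcal{S}_2$ and $\mathcal{S}_2\oplus\mathcal{S}_3$.
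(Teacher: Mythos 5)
Your argument is correct and takes essentially the same route as the paper, whose proof consists precisely of deriving the trace identity $c_{12}(S)(X)=c_{12}(\bar{S})(X^H)-\bar{g}(\mathrm{H},X^H)$ and reading the statement off it. The only points you add --- that the cyclic-sum condition defining $\mathcal{S}_2$ is automatically inherited via $S_{XYZ}\circ\pi=\bar{S}_{X^HY^HZ^H}$, and that $\mathrm{H}$ is horizontal so that $\bar{g}(\mathrm{H},X^H)=0$ for all $X$ forces $\mathrm{H}=0$ --- are details the paper leaves implicit, and both are correct.
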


\begin{remark}
\emph{Proposition \ref{conservation} and \ref{proposicion minimal}
(when the fibres are minimal) do not exclude that a homogeneous
structure tensor $\bar{S}$ in a class
$\mathcal{S}_i\oplus\mathcal{S}_j$ reduces to a tensor $S$
belonging to classes $\mathcal{S}_i$ or $\mathcal{S}_j$, or even
to the null tensor. We shall show some examples of this situations
in the next section.}
\end{remark}

\section{Examples}

\subsection{Real hyperbolic space}

The real $n$-dimensional hyperbolic space
$(\mathbb{R}H(n),\bar{g})$
$$\mathbb{R}H(n)=\{(\bar{y}^0,\bar{y}^1,\ldots,\bar{y}^{n-1})\in \mathbb{R}^n/\bar{y}^0>0\}$$
$$\bar{g}=\frac{1}{(\bar{y}^0)^2}\sum_{j=0}^{n-1}d\bar{y}^j\otimes d\bar{y}^j,$$
is a symmetric space, $\mathbb{R}H(n)=SO(n-1,1)/O(n-1)$. If we
consider the Iwasawa decomposition of its full Lie group of
isometries
$$SO(1,n-1)=O(n-1)AN,$$ then we can identify $\mathbb{R}H(n)\simeq AN$ so that the hyperbolic space has a solvable Lie
group structure given by
$$(\bar{x}^0,\bar{y}^1,\ldots,\bar{x}^{n-1})\cdot(\bar{y}^0,\bar{y}^1,\ldots,\bar{y}^{n-1})=
(\bar{x}^0\bar{y}^0,\bar{x}^0\bar{y}^1+\bar{x}^1,\ldots,\bar{x}^0\bar{y}^{n-1}+\bar{x}^{n-1}).$$
Hence the real hyperbolic space acts freely, transitively and by
isometries on itself by left translations. The homogeneous
structure tensor $\bar{S}$ associated to this action (see
\cite{TV}) is a $\mathcal{S}_1$ structure given by
$$\bar{S}_{\bar{X}\bar{Y}\bar{Z}}=\bar{g}(\bar{X},\bar{Y})\bar{g}(\bar{\xi},\bar{Y})-\bar{g}(\bar{\xi},\bar{Y})\bar{g}(\bar{X},\bar{Z}),
\hspace{1em}\bar{X},\bar{Y},\bar{Z}\in\f{X}(\mathbb{R}H(n))$$
where
$$\bar{\xi}=\bar{y}^0\frac{\partial}{\partial\bar{y}^0}.$$
Let $H_i\simeq \mathbb{R}$, $i=2,\ldots,n-1$, be the normal
subgroups of $\mathbb{R}H(n)$ given by
$$H_i=\{(1,0,\ldots,\lambda,0,\ldots,0)/\lambda\in\mathbb{R}\}$$
where $\lambda$ is in the $i$-th position. Reduction by the action
of $H_i$ gives the fibration
$$\begin{array}{rcl}
\mathbb{R}H(n) & \rarrow & \mathbb{R}H(n-1)\\
(\bar{y}^0,\ldots,\bar{y}^{n-1}) & \mapsto & (\bar{y}^0,\ldots,\bar{y}^{i-1},\bar{y}^{i+1}\ldots,\bar{y}^{n-1})
\end{array}$$
with vertical and horizontal subspaces at $\bar{y}\in\mathbb{R}H(n)$
$$\begin{array}{l}V_{\bar{y}}\mathbb{R}H(n)=\mathrm{span}\left\{\frac{\partial}{\partial\bar{y}^i}\right\},\\
H_{\bar{y}}\mathbb{R}H(n)=\mathrm{span}\left\{\frac{\partial}{\partial\bar{y}^0},\ldots,
\frac{\partial}{\partial\bar{y}^{i-1}},\frac{\partial}{\partial\bar{y}^{i+1}},
\ldots,\frac{\partial}{\partial\bar{y}^{n-1}}\right\}.\end{array}$$ Hence the
induced metric on $\mathbb{R}H(n-1)$ is
$$g=\frac{1}{(y^0)^2}\sum_{j=0}^{n-2}dy^j\otimes dy^j$$
where $(y^0,\ldots,y^{n-2})$ are the natural coordinates of
$\mathbb{R}H(n-1)$. As a straightforward computation shows, the
reduced homogeneous structure tensor $S$ is
$$S_{XYZ}=g(X,Y)g(\xi,Z)-g(\xi,Y)g(X,Z),\hspace{1em}X,Y,Z\in\f{X}(\mathbb{R}H(n-1))$$
where
$$\xi=y^0\frac{\partial}{\partial y^0}.$$
We have proved that the reduction $\mathbb{R}H(n)\to
\mathbb{R}H(n-1)$ sends the canonical tensor associated to the
solvable structure of the $n$-dimensional hyperbolic space to the canonical tensor associated to the
solvable structure of the $n-1$-dimensional hyperbolic space. The
reduction procedure has then preserved the $\mathcal{S}_1$ class in
this case.

\bigskip

We now confine ourselves to the $4$-dimensional hyperbolic space.
Besides its symmetric description, all other groups of isometries
acting transitively are of the type (cf. \cite{CGS}) $\bar{G}=FN$,
where $F$ is a connected closed subgroup of $SO(3)A$ with
nontrivial projection to $A$. In particular, we now consider
$$\bar{G}=SO(2)AN.$$
Geometrically, if we see $SO(2)$ as the isotropy group of the point
$\bar{x}=(1,0,0,0)$, its Lie algebra $\bar{\f{k}}$ are
infinitesimal rotations generated by
$$r= \bar{y}^2\frac{\partial}{\partial \bar{y}^3}-\bar{y}^3 \frac{\partial}{\partial \bar{y}^2}.$$
The subspace $\bar{\f{m}}=\f{a}\oplus\f{n}$, which is the lie
algebra of the factor $AN$, gives a reductive decomposition
$$\bar{\f{g}}=\bar{\f{m}}\oplus\bar{\f{k}}.$$
Let $a\in\f{a}$, $n_1,n_2,n_3\in\f{n}$ be the generators of
$\f{a}$ and $\f{n}$ respectively, where $n_i$ is the infinitesimal
translation in $\mathbb{R}H(4)$ in the direction of $\partial
/\partial \bar{y}^i$. All other reductive decompositions
$\bar{\f{g}}=\bar{\f{m}}^{\varphi}+\bar{\f{k}}$ associated to
$\bar{\f{g}}$ and $\bar{\f{k}}$ are given by the graph of any
equivariant map $\varphi : \f{m}\to \f{k}$. As a computation
shows, all these equivariant maps are
$$\begin{array}{rrcl}
\varphi_{(\lambda_0,\lambda_1)}: & \f{m} & \to & \f{k}\\
                                 & a & \mapsto & \lambda_0r\\
                                 & n_1 & \mapsto & \lambda_1r\\
                                 & n_2,n_3 & \mapsto & 0,\\
\end{array}$$
with $\lambda _0 ,\lambda _1 \in \bb{R}$. The homogeneous
structure tensors associated to this 2-parameter family of
reductive decompositions are
$$\bar{S}^{(\lambda_0,\lambda_1)}=\frac{1}{(\bar{y}^0)^3}\left(\sum_{k=1}^3d\bar{y}^k\otimes d\bar{y}^k\wedge d\bar{y}^0
-\lambda_0d\bar{y}^0\otimes d\bar{y}^2\wedge
d\bar{y}^3-\lambda_1d\bar{y}^1\otimes d\bar{y}^2\wedge
d\bar{y}^3\right),$$ and the canonical connection
$\wbarnabla=\bar{\nabla}-\bar{S}^{(\lambda_0,\lambda_1)}$ (where
$\bar{\nabla}$ is the Levi-Civita connection of $\bar{g}$) is then
given by
$$\begin{array}{c}
\wbarnabla_{\partial_0}\partial_0=-\frac{1}{\bar{y}^0}\partial_0,\quad
\wbarnabla_{\partial_0}\partial_1=-\frac{1}{\bar{y}^0}\partial_1,\quad
\wbarnabla_{\partial_0}\partial_2=-\frac{1}{\bar{y}^0}\partial_2+\frac{\lambda_0}{\bar{y}^0}\partial_3,\\
\wbarnabla_{\partial_0}\partial_3=-\frac{1}{\bar{y}^0}\partial_3-\frac{\lambda_0}{\bar{y}^0}\partial_2,\quad
\wbarnabla_{\partial_1}\partial_2=\frac{\lambda_1}{\bar{y}^0}\partial_3,\quad
\wbarnabla_{\partial_1}\partial_3=-\frac{\lambda_1}{\bar{y}^0}\partial_2,
\end{array}$$
where $\partial_k$ stands for
$\frac{\partial}{\partial\bar{y}^k}$. Let $H\simeq \bb{R}$ be the
subgroup of $\mathbb{R}H(4)$ given by
$$H=\{(1,\lambda,0,0)/\lambda\in\mathbb{R}\}.$$
We take the $H$-principal bundle
$$\begin{array}{rcl}
\mathbb{R}H(4) & \rarrow & \mathbb{R}H(3)\\
(\bar{y}^0,\bar{y}^{1},\bar{y}^{2},\bar{y}^{3}) & \mapsto &
(\bar{y}^0,\bar{y}^2,\bar{y}^3)
\end{array}$$
with mechanical connection form $\omega=d\bar{y}^1$. We have that
$$\wbarnabla \omega =\left(\frac{1}{\bar{y}^0}d\bar{y}^0\right)\cdot\omega$$
where we have identified $\f{h}\simeq\bb{R}$ and
$\mathrm{End}(\f{h})\simeq\bb{R}$. From Theorem \ref{Th2}, the
family of homogeneous structure tensors
$\bar{S}^{(\lambda_0,\lambda_1)}$ can then be reduced to
$\bb{R}H(3)$. If $(y^0,y^1,y^2)$ are the standard coordinates of
$\bb{R}H(3)$, these reduced homogeneous structure tensors form a
one-parameter family
$$S^{\lambda_0}=\frac{1}{(y^0)^3}\left(\sum_{k=1}^2 dy^k\otimes dy^k\wedge dy^0-\lambda_0dy^0\otimes dy^1\wedge dy^2\right).$$
Note that in the expression of both
$\bar{S}^{(\lambda_0,\lambda_1)}$ and $S^{\lambda_0}$ the first
summand is the standard $\mathcal{S}_1$ structure of $\bb{R}H(4)$
and $\bb{R}H(3)$ respectively. The other summands are of type
$\mathcal{S}_2\oplus \mathcal{S}_3$ since they have null trace,
which makes $\bar{S}^{(\lambda_0,\lambda_1)}$ and $S^{\lambda_0}$
of type $\mathcal{S}_1\oplus\mathcal{S}_2\oplus \mathcal{S}_3$ in
the generic case. In the especial case $\lambda_0=0$ we will have
a reduction of the generic class
$\mathcal{S}_1\oplus\mathcal{S}_2\oplus \mathcal{S}_3$ to the
class $\mathcal{S}_1$. This example can be generalized to the
principal bundle $\bb{R}H(n)\to\bb{R}H(n-1).$

\subsection{Hopf Fibrations}

\subsubsection{The fibration $S^3 \to S^2$}\label{sectHopf}

Let $S^3\subset\mathbb{R}^4\simeq \mathbb{C}^2$ be the $3$-sphere
with its standard Riemannian metric with full isometry group
$O(4)$. The natural action of $U(2)$ in $\mathbb{C}^2$ defines a
transitive and effective action of $U(2)$ on $S^3$ given by
$$\begin{array}{rcl}
U(2) & \hookrightarrow & SO(4)\\
\begin{pmatrix}a&b\\c&d\end{pmatrix} & \mapsto & \begin{pmatrix}\mathrm{Re}(a)
&-\mathrm{Im}(a)&\mathrm{Re}(b)&-\mathrm{Im}(b)\\\mathrm{Im}(a)
&\mathrm{Re}(a)&\mathrm{Im}(b)&\mathrm{Re}(b)\\\mathrm{Re}(c)
&-\mathrm{Im}(c)&\mathrm{Re}(d)&-\mathrm{Im}(d)\\\mathrm{Im}(c)&\mathrm{Re}(c)&\mathrm{Im}(d)&\mathrm{Re}(d)\end{pmatrix}
\end{array}.$$
The isotropy group at $\bar{x}=(1,0,0,0)\in S^3$ is
$$\bar{K}=\left\{\begin{pmatrix}1 & 0\\ 0 & z\end{pmatrix}\in U(2)/z\in U(1)\right\}$$
with lie algebra
$$\bar{\f{k}}=\mathrm{span}\left\{\begin{pmatrix}0 & 0\\ 0 & i\end{pmatrix}\right\}.$$
It is easy to see that the complement
$$\bar{\f{m}}=\mathrm{span}\left\{\begin{pmatrix}0 & 1\\-1 & 0\end{pmatrix},
\begin{pmatrix}0 & i\\i & 0\end{pmatrix},\begin{pmatrix}i & 0\\0 & -i\end{pmatrix}\right\}$$
makes $\f{u}(2)=\bar{\f{m}}\oplus\bar{\f{k}}$ a reductive
decomposition. The rest of complements $\bar{\f{m}}'$ giving
reductive decompositions $\f{u}(2)=\bar{\f{m}}'\oplus\bar{\f{k}}$
are obtained as the graph of $Ad(\bar{K})$-equivariant maps
$\varphi:\bar{\f{m}}\rarrow\bar{\f{k}}$. One can check that these
decompositions are exhausted by the following one-parameter family
of complements
$$\bar{\f{m}}_{\lambda}=\mathrm{span}\left\{\begin{pmatrix}0 & 1\\-1
& 0\end{pmatrix},\begin{pmatrix}0 & i\\i &
0\end{pmatrix},\begin{pmatrix}i & 0\\0 &
-i\end{pmatrix}+\lambda\begin{pmatrix}0 & 0\\0 &
i\end{pmatrix}\right\},\hspace{1em}\lambda\in\mathbb{R}.$$ From
formula (\ref{formula tensor (3,0)}), the expression of the
homogeneous structure tensor $\bar{S}^{\lambda}$ associated to
each reductive decomposition computed at $T_{\bar{x}}S^3$ is given
by
\begin{equation}\label{tensor S3 u(2)}
(\bar{S}^{\lambda})_{\bar{x}}=(\lambda-1)d\bar{x}^2\otimes
d\bar{x}^3\wedge d\bar{x}^4+d\bar{x}^3\otimes d\bar{x}^2\wedge
d\bar{x}^4-d\bar{x}^4\otimes d\bar{x}^2\wedge
d\bar{x}^3,\end{equation} where
$(\bar{x}^1,\bar{x}^2,\bar{x}^3,\bar{x}^4)$ is the natural system
of coordinates in $\mathbb{R}^4$.

Let $H$ be the subgroup of $U(2)$ isomorphic to $U(1)$ given by
$$H=\left\{\begin{pmatrix}z&0\\0&z\end{pmatrix}/z\in U(1)\right\}.$$
It is easy to check that $H$ is a normal subgroup of $U(2)$
acting freely on $S^3$. Reduction  by the action of $H$ gives the
Hopf fibration $S^3 \rarrow S^2$ with vertical and
horizontal subspaces at $\bar{x}$
$$V_{\bar{x}}S^3=\mathrm{span}\left\{\frac{\partial}{\partial\bar{x}_2}\right\},
\hspace{1em}H_{\bar{x}}S^3=\mathrm{span}\left\{\frac{\partial}{\partial\bar{x}_3},\frac{\partial}{\partial\bar{x}_4}\right\}.$$
Since all the terms of $\bar{S}^{\lambda}$ have the vertical
factor $d\bar{x}^2$, it is obvious that they all reduce to the
structure tensor $S=0$ on $S^2$, describing $S^2$ as a symmetric
space. Note that this is what one can expect since $S^2$ only
admits the zero homogeneous structure tensor \cite{TV}.

\bigskip

For the case $\lambda=0$ one can follow the proof of
Ambrose-Singer's Theorem to construct the Lie algebra of a group
acting transitively on $S^3$. As a computation shows the holonomy
of the connection $\wbarnabla=\bar{\nabla}-\bar{S}_0$ is trivial,
and one obtains the reductive decomposition
$T_eS^3\oplus\{0\}\simeq\f{su}(2)$ which describes the action of
$SU(2)\simeq S^3$ on itself. We then have  an example of a
homogeneous Riemannian structure $\bar{S}^0$ satisfying
$\wbarnabla \omega=\alpha\cdot\omega$ as in Theorem \ref{Th2}
($\omega$ being the mechanical connection form of the Hopf
fibration $S^3\to S^2$), but for which the structure group of the
fibration ($H=U(1)$) can not be seen as a normal subgroup of the
group ($\bar{G}'=SU(2)$) obtained by the proof of Ambrose-Singer's
Theorem .

\begin{remark}
\emph{There are not more reducible tensors than those described
above as the other groups acting transitively on $S^3$ are
$SO(4)$, which has no normal subgroups, and $SU(2)\simeq S^3$. In
addition, this procedure can be adapted to the Berger $3$-spheres,
where a family of homogeneous structures is calculated in
(\cite{GO2}). All reducible structures of this family reduce to
$S=0$ on $S^2$ as expected.}
\end{remark}

\begin{remark}
\emph{The groups acting isometrically and transitively on $S^7$
(see \cite{Salamon}) are $SO(7)$, $SU(4)$, $Sp(2)Sp(1)$, $U(4)$
and $Sp(2)U(1)$. The first two groups do not have normal subgroups
and hence do not fit in the reduction scheme. The group
$\bar{G}=Sp(2)Sp(1)$ has the normal subgroup $H=Sp(1)=SU(2)$,
which gives the Hopf fibration $S^7 \to S^4$. In this case, a
similar computation to the fibration $S^3 \to S^2$ shows that the
corresponding homogeneous Riemannian structures in the 7-sphere
reduce to the null tensor on $S^4$, the only homogeneous structure
in the four dimensional sphere. The last two groups are analized
in the following subsection.}
\end{remark}

\subsubsection{The fibration $S^7\to \mathbb{C}P^3$}

Let $\Delta^i_j$ denote the $4\times 4$ complex matrix with $1$ in
the $i$-th row and the $j$-th column and the rest zeros. Let $S^7$
be the standard $7$-sphere as a Riemannian sub-manifold of
$\mathbb{C}^4$ with the usual Hermitian inner product. The
standard action of the unitary group $U(4)$ on $\mathbb{C}^4$
gives a transitive and effective action on $S^7$ by isometries.
The isotropy group $\bar{K}$ at $\bar{x}=(1,0,0,0)\in\mathbb{S}^7$
is isomorphic to $U(3)$ and we can decompose
$\f{u}(4)=\bar{\f{m}}\oplus\bar{\f{k}}$ where
$$\bar{\f{k}}=\left\{\begin{pmatrix} 0 & 0 \\ 0 & A \end{pmatrix}/A\in\f{u}(3)\right\}$$
and
$$\bar{\f{m}}=\mathrm{span}\{i\Delta^1_1,\Delta^1_j-\Delta^j_1,i(\Delta^1_j+\Delta^j_1),j=1,2,3\}.$$
One can check that $\f{u}(4)=\bar{\f{m}}\oplus\bar{\f{k}}$ is the
unique reductive decomposition of $\f{u}(4)$ with respect to
$\bar{\f{k}}$. From (\ref{formula tensor (3,0)}), identifying
$\mathbb{R}^8\simeq \mathbb{C}^4$ and taking its natural
coordinates $(\bar{x}^1,\ldots,\bar{x}^8)$, the expression of the
homogeneous structure tensor $\bar{S}$ associated to this
decomposition at $T_{\bar{x}}S^7$ reads
\begin{eqnarray}\label{tensor S7 u(4)}
\bar{S}_{\bar{x}} & = & d\bar{x}^3\otimes
d\bar{x}^2\wedge d\bar{x}^4-d\bar{x}^4\otimes d\bar{x}^2\wedge
d\bar{x}^3+d\bar{x}^5\otimes
d\bar{x}^2\wedge d\bar{x}^6\nonumber\\
 & & -d\bar{x}^6\otimes
d\bar{x}^2\wedge d\bar{x}^5 +d\bar{x}^7\otimes d\bar{x}^2\wedge
d\bar{x}^8-d\bar{x}^8\otimes d\bar{x}^2\wedge d\bar{x}^7.
\end{eqnarray}
As a simple computation shows, this tensor belongs to the class
$\mathcal{S}_2\oplus\mathcal{S}_3$.

Let $H$ be the subgroup of $U(4)$ isomorphic to $U(1)$ given by
$$H=\left\{z\cdot I /z\in U(1)\right\}$$ where
$I$ is the $4\times 4$ identity matrix. It is obvious that $H$
is a normal subgroup of $U(4)$ the action of which on $S^7$ is
free. The reduction of $S^7$ by the action of $H$ gives the Hopf
fibration $S^7\rarrow\mathbb{C}P^3$ with which the
complex projective space inherits the Fubiny-Study metric. The
vertical and horizontal subspaces at $\bar{x}$ are
$$V_{\bar{x}}S^7=\mathrm{span}\left\{\frac{\partial}{\partial\bar{x}_2}\right\},
\hspace{1em}H_{\bar{x}}S^7=\mathrm{span}\left\{\frac{\partial}{\partial\bar{x}_3},
\ldots,\frac{\partial}{\partial\bar{x}_8}\right\}.$$ As in the
Hopf fibration $S^3 \to S^2$, the homogeneous structure tensor
$\bar{S}$ reduces to $S=0$, describing
$$\mathbb{C}P^3=\frac{U(4)}{U(3)\times U(1)}$$ as a symmetric
space.

\bigskip

If $\mathbb{H}$ denotes the quaternion algebra, we now see the
$7$-sphere
$$S^7=\left\{\begin{pmatrix}q_1\\q_2\end{pmatrix}\in\mathbb{H}^2/|q_1|^2+|q_2|^2=1\right\}$$ as a Riemannian
sub-manifold of $\mathbb{H}^2$ with the standard quaternion inner
product. The group $Sp(2)U(1)=Sp(2)\times_{\mathbb{Z}_2}U(1)$ acts
on $\mathbb{H}^2$ by
$$(A,z)\cdot\begin{pmatrix}q_1\\q_2\end{pmatrix}=A\begin{pmatrix}q_1\overline{z}\\q_2\overline{z}\end{pmatrix},\hspace{1em}\begin{pmatrix}q_1\\q_2\end{pmatrix}\in\mathbb{H}^2,A\in Sp(2),z\in U(1)$$
where $\overline{z}$ stands for the complex conjugation. This
action restricts to a transitive and effective action by
isometries on $S^7$. The isotropy group at $\bar{x}=(1,0)\in S^7$
is
$$\bar{K}=\left\{\left(\begin{pmatrix}z&0\\0&q\end{pmatrix},z\right)/q\in Sp(1),z\in U(1)\right\}/\mathbb{Z}_2$$
which is isomorphic to $Sp(1)U(1)$. Let $i,j,k$ be the imaginary
quaternion units and $i$ be the imaginary complex unit. Then, the
Lie algebra of $Sp(2)U(1)$ is $\f{sp}(2)\oplus\f{u}(1)$ where
\begin{eqnarray*}
\f{sp}(2)&=&\mathrm{span}\left\{\begin{pmatrix}0&1\\-1&0\end{pmatrix},\begin{pmatrix}ç
i&0\\0&0\end{pmatrix},\begin{pmatrix}0&i\\i&0\end{pmatrix},\begin{pmatrix}j&0\\0&0\end{pmatrix},\begin{pmatrix}0&j\\j&0\end{pmatrix}\right.\\
& &
\hspace{1.2cm}\left.\begin{pmatrix}k&0\\0&0\end{pmatrix},\begin{pmatrix}0&k\\k&0\end{pmatrix},\begin{pmatrix}0&0\\0&i\end{pmatrix},\begin{pmatrix}0&0\\0&j\end{pmatrix},\begin{pmatrix}0&0\\0&k\end{pmatrix}\right\}
\end{eqnarray*}
and $\f{u}(1)=\mathrm{span}\{i\}$; and then the isotropy algebra
is
$$\bar{\f{k}}=\mathrm{span}\left\{\begin{pmatrix}i&0\\0&0\end{pmatrix}+i,\begin{pmatrix}0&0\\0&i\end{pmatrix},\begin{pmatrix}0&0\\0&j\end{pmatrix},\begin{pmatrix}0&0\\0&k\end{pmatrix}\right\}.$$
Taking
\begin{eqnarray*}
\bar{\f{m}}&=&\mathrm{span}\left\{\begin{pmatrix}0&1\\-1&0\end{pmatrix},\begin{pmatrix}i&0\\0&0\end{pmatrix},\begin{pmatrix}0&i\\i&0\end{pmatrix}\right.\\
& &
\left.\begin{pmatrix}j&0\\0&0\end{pmatrix},\begin{pmatrix}0&j\\j&0\end{pmatrix},\begin{pmatrix}k&0\\0&0\end{pmatrix},\begin{pmatrix}0&k\\k&0\end{pmatrix}\right\}
\end{eqnarray*}
we have that
$\f{sp}(2)\oplus\f{u}(1)=\bar{\f{m}}\oplus\bar{\f{k}}$
 is a reductive decomposition. All other reductive decompositions
associated to $\f{sp}(2)\oplus\f{u}(1)$ and $\bar{\f{k}}$ are
given by a one-parameter family of complements
$\bar{\f{m}}_{\lambda}$, $\lambda\in\mathbb{R}$, which are the
graph of the $Ad(\bar{K})$-equivariant maps
$\varphi_{\lambda}:\bar{\f{m}}\rarrow\bar{\f{k}}$, where
$\varphi_{\lambda}$ maps $\begin{pmatrix}i&0\\0&0\end{pmatrix}$ to
$\lambda\begin{pmatrix}i&0\\0&0\end{pmatrix}+\lambda i$ and the
rest of elements of the basis to zero. Identifying
$\mathbb{H}^2\equiv\mathbb{R}^8$, the homogeneous structure tensor
$\bar{S}^{\lambda}$ associated to each reductive decomposition
$\f{sp}(2)\oplus\f{u}(1)=\bar{\f{m}}_{\lambda}\oplus\bar{\f{k}}$
is computed at $T_{\bar{x}}S^7$ as
\begin{eqnarray*}
(\bar{S}^{\lambda})_{\bar{x}} & = &
d\bar{x}^5\otimes d\bar{x}^2\wedge d\bar{x}^6+d\bar{x}^5\otimes d\bar{x}^3\wedge d\bar{x}^7+d\bar{x}^5\otimes d\bar{x}^4\wedge d\bar{x}^8\\
 & &
 -\lambda d\bar{x}^2\otimes d\bar{x}^5\wedge d\bar{x}^6+(1+2\lambda)d\bar{x}^2\otimes d\bar{x}^3\wedge d\bar{x}^4+\lambda d\bar{x}^2\otimes d\bar{x}^7\wedge d\bar{x}^8\\
  & &
  +d\bar{x}^6\otimes d\bar{x}^5\wedge d\bar{x}^2+d\bar{x}^6\otimes d\bar{x}^3\wedge d\bar{x}^8-d\bar{x}^6\otimes d\bar{x}^4\wedge d\bar{x}^7\\
   & &
   +d\bar{x}^3\otimes d\bar{x}^2\wedge d\bar{x}^4+d\bar{x}^4\otimes d\bar{x}^2\wedge d\bar{x}^3\\
   & &
   -d\bar{x}^7\otimes d\bar{x}^3\wedge d\bar{x}^5-d\bar{x}^7\otimes d\bar{x}^2\wedge d\bar{x}^8+d\bar{x}^7\otimes d\bar{x}^4\wedge d\bar{x}^6\\
   & &
   -d\bar{x}^8\otimes d\bar{x}^4\wedge d\bar{x}^5+d\bar{x}^8\otimes d\bar{x}^2\wedge d\bar{x}^7-d\bar{x}^8\otimes d\bar{x}^3\wedge d\bar{x}^6.
\end{eqnarray*}

Let $H=\{(Id,w)/w\in U(1)\}\subset Sp(2)U(1)$, where $Id$ is the
identity of $Sp(2)$, it is easy to see that $H$ is a normal
subgroup of $Sp(2)U(1)$ isomorphic to $U(1)$. Reduction by
the action of $H$ gives again the Hopf fibration $\pi:S^7\rarrow \mathbb{C}P^3$ with
$\pi(\bar{x})=[1:0:0:0]\in\mathbb{C}P^3$. The vertical and
horizontal subspaces of $\pi$ at $\bar{x}$ are
$$V_{\bar{x}}S^7=\mathrm{span}\left\{\frac{\partial}{\partial\bar{x}_2}\right\},\hspace{1em}
H_{\bar{x}}S^7=\mathrm{span}\left\{\frac{\partial}{\partial\bar{x}_3},\ldots,\frac{\partial}{\partial\bar{x}_8}\right\}$$
Let $(t^1,\ldots,t^6): \mathbb{C}P^3-\{z_0=0\} \rarrow
\mathbb{R}^6$ be the coordinate system around $x=[1:0:0:0]$ given
by
\[
[z_0\colon z_1 \colon z_2 \colon z_3] \mapsto
\left(\mathrm{Re}\left(\tfrac{z_1}{z_0}\right),
\mathrm{Im}\left(\tfrac{z_1}{z_0}\right),\mathrm{Re}\left(\tfrac{z_2}{z_0}\right),
\mathrm{Im}\left(\tfrac{z_2}{z_0}\right)
,\mathrm{Re}\left(\tfrac{z_3}{z_0}\right),\mathrm{Im}\left(\tfrac{z_3}{z_0}\right)\right).
\]
The reduced homogeneous structure tensor $S$ is computed at
$T_x\mathbb{C}P^3$ as
\begin{eqnarray*}
S_x & = & dt^3\otimes dt^1\wedge dt^5+dt^3\otimes dt^2\wedge dt^6\\
    & + & dt^4\otimes dt^1\wedge dt^6-dt^4\otimes dt^2\wedge dt^5\\
    & + & dt^5\otimes dt^2\wedge dt^4-dt^5\otimes dt^1\wedge dt^3\\
    & - & dt^6\otimes dt^2\wedge dt^3-dt^6\otimes dt^1\wedge dt^4.
\end{eqnarray*}
It is easy to check that $\bar{S}^{\lambda}$ is a
$\mathcal{S}_2\oplus\mathcal{S}_3$ structure for all
$\lambda\in\mathbb{R}$ which is not $\mathcal{S}_2$ nor
$\mathcal{S}_3$ for any $\lambda$, and $S$ is also a strict
$\mathcal{S}_2\oplus\mathcal{S}_3$ structure. Note that in the
latter and the previous example the class
$\mathcal{S}_2\oplus\mathcal{S}_3$ is preserved by the reduction
procedure. This fact is expected from Proposition \ref{proposicion
minimal} since the fibres of the Hopf fibration are totally
geodesic and in particular minimal Riemannian sub-manifolds of
$S^7$.

\section{Almost contact metric-almost Hermitian and Sasakiann-K\"ahler reduction}

An \textit{almost contact structure} on a manifold $\bar{M}$ is a
triple $(\phi,\xi,\eta)$ where $\phi$ is a $(1,1)$-tensor field,
$\xi$ is a vector field, and $\eta$ is a $1$-form satisfying
$$\begin{array}{c}
\phi(\xi)=0,\hspace{1em}\eta(\phi(\bar{X}))=0,\hspace{1em}\eta(\xi)=1,\\
\phi^2=-\mathrm{id}+\eta\otimes\xi ,
\end{array}$$
for all $\bar{X}\in\f{X}(\bar{M})$. The almost contact structure
is said to be \textit{strictly regular} if $\xi$ is a regular
vector field such that all orbits of which are homeomorphic, and
\textit{invariant} if $\phi$ and $\eta$ are invariant by the
action of the one parameter group of $\xi$. In the following all
almost contact structures are supposed to be invariant and
strictly regular. In \cite{Ogiue} the following results are
proved:

\begin{theorem}\label{thm reduccion contact Oigue}
Let $(\phi,\xi,\eta)$ be an almost contact structure and $M$ the
space of orbits given by $\xi$. Then $M$ is endowed with a smooth
structure such that $\pi:\bar{M}\rarrow M$ is an principal bundle
and $\eta$ is a connection form.
\end{theorem}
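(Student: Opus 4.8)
The plan is to realize the projection $\pi:\bar{M}\rarrow M$ as the quotient of $\bar{M}$ by the free action of the one-parameter group generated by $\xi$, and then to recognize $\eta$ as the associated connection form. First I would exploit strict regularity: since $\eta(\xi)=1$, the field $\xi$ is nowhere vanishing, and its flow defines a locally free action of a one-dimensional Lie group. Because all orbits are homeomorphic, this group is either $S^1$ (if the orbits are closed) or $\bb{R}$ (if they are not); call it $H$, with Lie algebra $\f{h}\cong\bb{R}$ generated by $\xi$. Regularity in the sense of Palais means that every point has a cubical (flow-box) neighborhood met by each integral curve of $\xi$ at most once, which forces the $H$-action to be free and proper.

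The main work, and the step I expect to be the delicate one, is the construction of the smooth structure on $M=\bar{M}/H$. Here I would use the flow-box neighborhoods to produce, around each point, a local transversal slice $\Sigma$ of codimension one meeting each nearby orbit exactly once. These slices serve as charts for $M$: the quotient topology is Hausdorff precisely because distinguished neighborhoods separate distinct orbits, and the transition maps between overlapping slices are smooth, being given by following the flow of $\xi$ for a smoothly varying time. This makes $M$ a smooth manifold of dimension $\dim\bar{M}-1$ and $\pi$ a smooth submersion. Without the regularity hypothesis the quotient need not be Hausdorff or even a manifold, so this is exactly where that assumption is indispensable.

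With the slices in hand, local triviality is immediate: a slice $\Sigma$ over an open set $U\subset M$ provides a local section $s$, and $(x,h)\mapsto \Phi_h(s(x))$ gives an $H$-equivariant diffeomorphism $U\times H\rarrow\pi^{-1}(U)$, exhibiting $\pi$ as an $H$-principal bundle. Finally, to see that $\eta$ is a connection form I would verify its two defining properties. Identifying $\f{h}\cong\bb{R}$ so that the fundamental vector field of the generator is $\xi$ itself, the condition $\eta(\xi)=1$ says that $\eta$ reproduces the generators of the vertical distribution. For equivariance, invariance of the almost contact structure means $\eta$ is invariant under the flow of $\xi$, that is $\Phi_h^*\eta=\eta$; since $H$ is abelian its adjoint action is trivial, so the required relation $\Phi_h^*\eta=Ad(h^{-1})\cdot\eta$ holds automatically. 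Hence $\eta$ is a connection form.
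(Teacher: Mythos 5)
First, a remark on the comparison you asked for: the paper does not prove this statement at all. It is imported verbatim from Ogiue's work (the text introduces it with ``In \cite{Ogiue} the following results are proved''), so there is no internal proof to measure yours against and your attempt must stand on its own.

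On its own terms, your outline follows the right general strategy (quotient by the flow, transversal slices as charts and local sections, $\eta(\xi)=1$ and flow-invariance as the two connection-form axioms), but it has a genuine gap at precisely the point you pass over in one clause: the assertion that the flow of $\xi$ ``defines a locally free action of a one-dimensional Lie group'' which is $S^1$ when the orbits are closed. If the orbits are circles, the flow is a priori only an $\mathbb{R}$-action whose isotropy at $\bar{x}$ is the lattice $\lambda(\bar{x})\mathbb{Z}$ generated by the minimal period; it factors through a single group $H=\mathbb{R}/\lambda_0\mathbb{Z}$ acting on all of $\bar{M}$ only if the period function $\lambda$ is constant. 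This is not a formality: with non-constant $\lambda$ the map $\pi$ is a Seifert-type fibration rather than a principal bundle, and even if one reparametrizes to manufacture an $S^1$-action the fundamental vector field of the generator becomes $\lambda\xi/2\pi$, so $\eta$ violates the axiom $\eta(\sigma^{\ast})=\sigma$ unless $\lambda$ is constant. The constancy is exactly where the hypothesis that the structure is \emph{invariant} does real work: $\mathcal{L}_\xi\eta=0$ together with $\eta(\xi)=1$ gives $\iota_\xi d\eta=0$, whence for nearby closed orbits $\lambda(p)-\lambda(q)=\int_{\gamma_p}\eta-\int_{\gamma_q}\eta=\int_C d\eta=0$, where $C$ is the cylinder swept out by the orbits through a path from $q$ to $p$ (all of whose tangent planes contain $\xi$); then $\lambda$ is locally constant, hence constant. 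Your proof never uses the invariance of $\eta$ except for the trivial $Ad$-equivariance check at the end, which is the telltale sign that this step is missing. A secondary, smaller issue: in the $\mathbb{R}$-orbit case, regularity plus homeomorphic orbits does not by itself yield completeness of $\xi$ or Hausdorffness of the quotient (consider a regular horizontal field on a punctured plane crossed with a suitable factor), so ``the quotient topology is Hausdorff precisely because distinguished neighborhoods separate distinct orbits'' is an assertion rather than an argument.
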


\begin{theorem}
In the situation of the previous Theorem, the $(1,1)$-tensor field $J$ defined in $M$ by
$$J_xX=\pi_*(\phi_{\bar{x}}X^H),\hspace{1em}x\in M,X\in\f{X}(M),$$
where $\bar{x}\in \pi ^{-1}(x)\subset \bar{M}$ and $X^H$ is the
horizontal lift of $X$ with respect to $\eta$, is an almost
complex structure.
\end{theorem}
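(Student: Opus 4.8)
The plan is to show directly that $J^2=-\mathrm{id}$, the defining property of an almost complex structure, after first checking that $J$ is well defined and tensorial. The two algebraic relations that will do the work are $\eta(\phi(\bar{X}))=0$ and $\phi^2=-\mathrm{id}+\eta\otimes\xi$, combined with the fact (Theorem \ref{thm reduccion contact Oigue}) that $\eta$ is the connection form of $\pi:\bar{M}\rarrow M$; in particular a tangent vector $Z$ is horizontal exactly when $\eta(Z)=0$, and by (\ref{pay}) the horizontal lift of a projected vector is its horizontal part, $(\pi_*Z)^H=Z^h$.

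First I would settle well-definedness, i.e.\ independence of the chosen $\bar{x}\in\pi^{-1}(x)$. Two points in a fibre are joined by the flow $\Psi_t$ of $\xi$, and $\pi\circ\Psi_t=\pi$. Since $\eta$ is $\Psi_t$-invariant, the horizontal distribution $\ker\eta$ is preserved, so $(\Psi_t)_*$ carries the horizontal lift at $\bar{x}$ to the horizontal lift at $\Psi_t(\bar{x})$; since $\phi$ is also $\Psi_t$-invariant, $\phi_{\Psi_t(\bar{x})}X^H=(\Psi_t)_*(\phi_{\bar{x}}X^H)$, and applying $\pi_*$ together with $\pi\circ\Psi_t=\pi$ gives the same value of $J_xX$. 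As $\phi$ is a pointwise tensor and horizontal lifting is $\mathbb{R}$-linear, $J_x$ depends only on $X_x$ and varies smoothly, so $J$ is a genuine $(1,1)$-tensor field.

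The central observation, and the only subtle point, is that $\phi_{\bar{x}}X^H$ is itself horizontal: by $\eta\circ\phi=0$ we have $\eta(\phi_{\bar{x}}X^H)=0$. Hence $(\phi_{\bar{x}}X^H)^h=\phi_{\bar{x}}X^H$, and by (\ref{pay}) the horizontal lift of its projection is itself, that is $(J_xX)^H=\phi_{\bar{x}}X^H$. With this identity in hand the computation of $J^2$ is immediate:
\[
J^2_xX=\pi_*\bigl(\phi_{\bar{x}}(J_xX)^H\bigr)=\pi_*\bigl(\phi_{\bar{x}}\phi_{\bar{x}}X^H\bigr)=\pi_*\bigl(\phi^2_{\bar{x}}X^H\bigr)=\pi_*\bigl(-X^H+\eta(X^H)\,\xi_{\bar{x}}\bigr).
\]
Since $X^H$ is horizontal, $\eta(X^H)=0$, so the last expression is $\pi_*(-X^H)=-X$. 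Therefore $J^2=-\mathrm{id}$ and $J$ is an almost complex structure. The main obstacle is thus not the final algebra but correctly identifying $(J_xX)^H$ with $\phi_{\bar{x}}X^H$; everything else follows from the structure relations and horizontality.
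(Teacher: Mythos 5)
Your proof is correct. Note that the paper itself offers no proof of this statement: it is quoted verbatim from Ogiue's paper \cite{Ogiue} in \S 5, so there is no in-paper argument to compare against. Your route is the standard one and it is complete: well-definedness follows from the invariance of $\eta$ and $\phi$ under the flow of $\xi$ (which acts transitively on each fibre by strict regularity), the key identity $(J_xX)^H=\phi_{\bar{x}}X^H$ follows from $\eta\circ\phi=0$ together with (\ref{pay}), and then $\phi^2=-\mathrm{id}+\eta\otimes\xi$ plus $\eta(X^H)=0$ gives $J^2=-\mathrm{id}$.
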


If $\bar{M}$ is equipped with a Riemannian metric $\bar{g}$, an
almost contact structure $(\phi,\xi,\eta)$ is said to be {\it
metric} if the following conditions hold
$$\bar{g}(\xi,\bar{X})=\eta(\bar{X}),\hspace{1em}\bar{g}
(\phi\bar{X},\phi\bar{Y})=\bar{g}(\bar{X},\bar{Y})+\eta(\bar{X})\eta(\bar{Y}).$$
Note that this implies that $\eta$ defines the mechanical
connection in $(\bar{M},\bar{g})\rarrow M$ and induces a
Riemannian metric $g$ in $M$. In this situation it can be proved
\cite{Ogiue} that $(J,g)$ is almost Hermitian. Let
$\Phi(\bar{X},\bar{Y})=\bar{g}(\phi\bar{X},\bar{Y})$ be the
fundamental $2$-form of the almost contact metric structure, then
$(\phi,\xi,\eta,g)$ is called an \textit{almost Sasakian}
structure if $d\eta=2\Phi$. If moreover
$\bar{\nabla}\phi=\bar{g}\otimes\xi-\mathrm{id}\otimes\eta $ where
$\bar{\nabla}$ is the Levi-Civita connection of $\bar{g}$, then it
is called a \textit{Sasakian} structure. It can be proved
\cite{Ogiue} that if $(\phi,\xi,\eta,g)$ is (almost) Sasakian then
$(J,g)$ is (almost) K\"ahler.

An almost contact metric manifold is called homogeneous almost
contact metric if there is a transitive group of isometries such
that $\phi$ is invariant (and then also $\xi$ and $\eta$). If the
manifold is (almost) Sasakian then it is called (almost) Sasakian
homogeneous. A homogeneous structure tensor $\bar{S}$ on $\bar{M}$ is
called a homogeneous almost contact metric structure if
$\wbarnabla\phi=0$ (and then $\wbarnabla\xi=0$ and
$\wbarnabla\eta=0$). From the result of Kiri\v{c}enko \cite{Kir}
we have that a connected, simply connected and complete Riemannian
manifold is a homogeneous almost contact metric manifold if and
only if it admits a homogeneous almost contact metric structure.
If the manifold is (almost) Sasakian then it is homogeneous
(almost) Sasakian if and only if it admits a homogeneous (almost)
Sasakian structure.

We now assume that $\bar{S}$ is an almost contact metric
homogeneous structure invariant by the one parameter group of
$\xi$. Since $\wbarnabla\eta=0$, we are in the situation of
Theorem \ref{Th2} and then the tensor
$S_XY=\pi_*(\bar{S}_{X^H}Y^H)$ defines a homogenous structure on
$M$.

\begin{proposition}\label{ultprop}

The reduced homogeneous structure $S$ in $M$ is a homogeneous
almost Hermitian structure on $M$. Moreover, if $\bar{S}$ is
homogeneous (almost) Sasakian structure, then the reduced
homogeneous structure $S$ is a homogeneous (almost) K\"ahler
structure on $M$.
\end{proposition}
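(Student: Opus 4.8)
The plan is to verify the single differential identity $\wnabla J=0$ for the reduced canonical connection $\wnabla=\nabla-S$; since Theorem \ref{Th2} already guarantees that $S$ is a homogeneous Riemannian structure, this is precisely what is needed to conclude that $S$ is a homogeneous almost Hermitian structure. The (almost) K\"ahler statement will then be immediate from the result of \cite{Ogiue} recalled above, which asserts that $(J,g)$ is (almost) K\"ahler whenever $(\phi,\xi,\eta,g)$ is (almost) Sasakian.

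The crucial preliminary step is to relate $\phi$ upstairs to $J$ downstairs through horizontal lifts. Because $\eta$ is the mechanical connection form, the horizontal distribution equals $\ker\eta$, and the almost contact identity $\eta\circ\phi=0$ shows that $\phi X^H$ is horizontal for every $X\in\f{X}(M)$. Combining this with \eqref{pay} and the definition $J_xX=\pi_*(\phi_{\bar{x}}X^H)$ yields the identity
$$(JX)^H=\phi X^H,\qquad X\in\f{X}(M),$$
which is the key that converts a computation of $\wnabla J$ on $M$ into the condition $\wbarnabla\phi=0$ on $\bar{M}$.

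With this in hand, I would lift $(\wnabla_XJ)Y=\wnabla_X(JY)-J(\wnabla_XY)$ horizontally and use $(\wnabla_XY)^H=\wbarnabla_{X^H}Y^H$ from \eqref{wbarnabla de X^H Y^H} together with the identity above to obtain
$$\bigl((\wnabla_XJ)Y\bigr)^H=\wbarnabla_{X^H}(\phi Y^H)-\phi(\wbarnabla_{X^H}Y^H)=(\wbarnabla_{X^H}\phi)(Y^H).$$
Since $\bar{S}$ is a homogeneous almost contact metric structure we have $\wbarnabla\phi=0$, so the right-hand side vanishes; as the horizontal lift is injective, this forces $(\wnabla_XJ)Y=0$, i.e.\ $\wnabla J=0$. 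Hence $S$ is a homogeneous almost Hermitian structure.

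Finally, for the Sasakian case, the cited results of \cite{Ogiue} give that $(J,g)$ is (almost) K\"ahler on $M$ as soon as $(\phi,\xi,\eta,g)$ is (almost) Sasakian; together with $\wnabla J=0$ and the fact that $S$ is a homogeneous Riemannian structure, this shows that $S$ is a homogeneous (almost) K\"ahler structure. I expect no serious difficulty in this argument: the only point demanding care is the verification that $\phi$ preserves the horizontal distribution, which underlies the lift identity $(JX)^H=\phi X^H$; once that is secured, everything collapses to the single input $\wbarnabla\phi=0$.
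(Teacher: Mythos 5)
Your proposal is correct and follows essentially the same route as the paper: both reduce everything to the single identity $\wnabla J=0$, obtained by lifting the computation of $(\wnabla_XJ)Y$ horizontally via $(JY)^H=\phi Y^H$ (justified by $\eta\circ\phi=0$) and $(\wnabla_XY)^H=\wbarnabla_{X^H}Y^H$, and then invoking $\wbarnabla\phi=0$; the K\"ahler statement is delegated to the cited results of \cite{Ogiue} exactly as in the paper. Your explicit isolation of the lift identity $(JX)^H=\phi X^H$ is a point the paper uses only implicitly, but the argument is the same.
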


\begin{proof}
Let $\wnabla=\nabla-S$, where $\nabla$ is the Levi-Civita
connection of $g$. Then $\wnabla_XY=\pi_*(\wbarnabla_{X^H}Y^H)$.
Since $\eta(\phi(\bar{X}))=0$ we have that $\phi(\bar{X})$ is
horizontal for all $\bar{X}\in\f{X}(\bar{M})$. For any
$X,Y\in\f{X}(M)$ we have
\begin{eqnarray*}
\left(\wnabla_XJ\right)Y & = & \wnabla_X(JY)-J\left(\wnabla_XY\right)\\
 & = & \pi_*\left(\wbarnabla_{X^H}(JY)^H\right)-\pi_*\left(\phi\left(\wbarnabla_{X^H}Y^H\right)\right)\\
 & = & \pi_*\left(\wbarnabla_{X^H}\left(\phi Y^H\right)-\phi\left(\wbarnabla_{X^H}Y^H\right)\right)\\
 & = & \pi_*\left(\left(\wbarnabla_{X^H}\phi\right)Y^H\right)\\
 & = & 0
\end{eqnarray*}
and hence $\wnabla J=0$.
\end{proof}

\bigskip

We now apply Proposition \ref{ultprop} to the Hopf fibrations $S^3
\to S^2$ and $S^7 \to \mathbb{C}P^3$ and check that the
Sasakian-K\"ahler reduction procedure gives the null K\"ahler
structures of the reduced spaces, the only homogeneous K\"ahler
structures existing on $S^2$ and $\mathbb{C}P^3$. For the first
case, let $(\bar{x}^1,\bar{x}^2,\bar{x}^3,\bar{x}^4)$ be the
natural coordinates of $\mathbb{R}^4$ and
$$\alpha=-\bar{x}^2d\bar{x}^1+\bar{x}^1d\bar{x}^2-\bar{x}^4d\bar{x}^3+\bar{x}^3d\bar{x}^4.$$
If $i:S^3\rarrow \mathbb{R}^4$ is the natural immersion of the
Euclidean 3-sphere in $\mathbb{R}^4$, the form $\eta=i^*\alpha$
defines an almost contact metric structure on $S^3$ which is
moreover a Sasakian structure \cite{Blair}. One can check (see
\cite{GO2}) that the homogeneous Sasakian structures on $S^3$ with
respect to $\eta$ are those given in (\ref{tensor S3 u(2)}) after
the isometry
$$\begin{array}{rrcl}
\varphi: & S^3 & \longrightarrow & S^3\\
 & (\bar{x}^1,\bar{x}^2,\bar{x}^3,\bar{x}^4) & \mapsto &
  (\bar{x}^1,-\bar{x}^2,-\bar{x}^3,-\bar{x}^4),
\end{array}$$
namely
\begin{equation}\label{family sasakian homogeneous struct S^3}
(\bar{S}^{\lambda})_{\bar{x}}=(1-\lambda)d\bar{x}^2\otimes
d\bar{x}^3\wedge d\bar{x}^4-d\bar{x}^3\otimes d\bar{x}^2\wedge
d\bar{x}^4+d\bar{x}^4\otimes d\bar{x}^2\wedge
d\bar{x}^3.
\end{equation}
This homogeneous structures are obtained from the group of isometries
$$G=\{\varphi\circ \Phi_{a}\circ \varphi^{-1}/a\in U(2)\}$$
where $\Phi_{a}$ denotes the standard action of $U(2)$ on $S^3$. The subgroup
$$H=\{\varphi\circ \Phi_{z}\circ \varphi^{-1}/z\in U(1)\}$$ is a normal
subgroup of $G$, where $z\in U(1)$ is seen in $U(2)$ as the
matrix
$\begin{pmatrix}z& 0\\ 0&z\end{pmatrix}$. Reduction by the
action of $H$ gives the fibration
$$\begin{array}{rcl}
S^3 & \rarrow & S^2\\
(z_1,z_2) & \mapsto & (2z_1z_2,|z_1|^2-|z_2|^2)
\end{array}$$
which is precisely the fibration given by the Sasakian structure $\eta$ in the sense of Theorem \ref{thm reduccion contact Oigue}.
The reduction described in Proposition
\ref{ultprop} by the action of $H$ of the family of homogeneous structures (\ref{family sasakian homogeneous struct S^3}) is (as we had in \S
\ref{sectHopf}) the tensor $S=0$.

\bigskip

As for the second fibration, we take
$(\bar{x}^1,\ldots,\bar{x}^8)$ the coordinates of $\mathbb{R}^8$
and
$$\alpha=-\bar{x}^2d\bar{x}^1+\bar{x}^1d\bar{x}^2-\bar{x}^4d\bar{x}^3+\bar{x}^3d\bar{x}^4-
\bar{x}^6d\bar{x}^5+\bar{x}^5d\bar{x}^6-\bar{x}^8d\bar{x}^7+\bar{x}^7d\bar{x}^8.$$
The form $\eta=i^*\alpha$, where $i:S^7\rarrow \mathbb{R}^8$ is
the natural immersion of the Euclidean 7-sphere, defines an almost
contact metric structure on $S^7$ which is moreover Sasakian (cf.
\cite{Blair}). A homogeneous Sasakian structure on $S^7$ with
respect to $\eta$ is obtained by transforming (\ref{tensor S7
u(4)}) with respect to the isometry
$$\begin{array}{rrcl}
\varphi: & S^7 & \longrightarrow & S^7\\
 & (\bar{x}^1,\ldots,\bar{x}^8) & \mapsto & (\bar{x}^1,-\bar{x}^2,\ldots,-\bar{x}^8),
\end{array}$$
and reads
\begin{eqnarray}\label{family sasakian homogeneous struct S^7}
\bar{S}_{\bar{x}} & = & -d\bar{x}^3\otimes
d\bar{x}^2\wedge d\bar{x}^4+d\bar{x}^4\otimes d\bar{x}^2\wedge
d\bar{x}^3-d\bar{x}^5\otimes
d\bar{x}^2\wedge d\bar{x}^6\nonumber\\
 & & +d\bar{x}^6\otimes
d\bar{x}^2\wedge d\bar{x}^5 -d\bar{x}^7\otimes d\bar{x}^2\wedge
d\bar{x}^8+d\bar{x}^8\otimes d\bar{x}^2\wedge d\bar{x}^7.
\end{eqnarray}
This family of homogeneous structure tensors are also obtained from the action of the group of isometries
$$G=\{\varphi\circ \Phi_{a}\circ \varphi^{-1}/a\in U(4)\}$$
where $\Phi_{a}$ denotes the standard action of $U(4)$ on $S^7$. The subgroup
$$H=\{\varphi\circ \Phi_{z}\circ \varphi^{-1}/z\in U(1)\}$$ is a
normal subgroup of $G$, and reduction by the action of $H$
provides
the fibration given by the Sasakian structure $\eta$ in
the sense of Theorem \ref{thm reduccion contact Oigue}. Again, the
family (\ref{family sasakian homogeneous struct S^7}) reduces to
$S=0$.

\bigskip

A non trivial projection of homogeneous Sasakian structure tensors
can be found in the following situation. Let $\pi : \bar{M}\to
\mathbb{C}H(n)$ be a principal line bundle endowed with the
Sasakian structure $(\phi,\xi,\eta,\bar{g})$ given by an invariant
metric $\bar{g}$ and its corresponding mechanical connection
$\eta$ in $\bar{M}$ (see \cite{GO}). Then, every homogeneous
K\"ahler structure tensor $S$ in $\mathbb{C}H(n)$ can be obtained
as the reduction of the Sasakian homogeneous structure tensor
\[
\bar{S}_{X^H}Y^H=(S_XY)^H-\bar{g}(X^H,\phi Y^H)\xi,\quad
\bar{S}_{X^H}\xi=-\phi X^H=\bar{S}_\xi X^H,\quad \bar{S}_\xi
\xi=0,
\]
in the sense of Proposition \ref{ultprop}. The description of all
these tensors have been previously studied in \cite{GO}.
Nevertheless, it is interesting to point out that the goal of that
reference was the lift of structures from $\mathbb{C}H(n)$ to
$\bar{M}$. The result given in Proposition \ref{ultprop} thus
gives a reverse procedure of that particular situation.

\section*{Acknowledgements}
The authors are deeply indebted to Prof. Andrew Swann and Prof.
P.M. Gadea for useful conversations about the topics of this
paper.

This work has been partially funded by Ministerio de Ciencia e
Innovaci\'on (Spain) under project MTM2011-22528.

\end{document}